\newtheorem{theorem}{Theorem}[section]
\newtheorem{lemma}[theorem]{Lemma}
\newtheorem{conj}[theorem]{Conjecture}
\theoremstyle{definition}
\newtheorem{remark}[theorem]{Remark}
\numberwithin{equation}{section}
\title[]{Topological characterization and Hodge structures of some rationally elliptic projective fourfolds }
\author{Jianqiang Yang}
\address{Department of Mathematics, Honghe University,  Yunnan 661199, China}
\email{yangjianqiang@uoh.edu.cn}
\email{yangjq\_math@sina.com}
\date{}
\begin{document}

\begin{abstract}
In this paper, we consider the rationally elliptic projective fourfolds that are holomorphically embedded into the complex projective eight-space $\mathbb{P}^8$. It is proved that a simply-connected $\mathbb Q$-homological projective four-space $X\subset\mathbb{P}^8$ is biholomorphic to $\mathbb P^4$ by using Euler characteristic and Chern numbers formulae of the normal bundle for a holomorphic embedding $i:X \to\mathbb{P}^8$. During the process of proving the result, we incidentally discovered that a $\mathbb{Q}$-homological projective 4-space $X$ with Kodaira dimension $k(X) \neq 4$ is isomorphic to $\mathbb{P}^4$. This finding provides a positive answer to a question posed by Wilson in the case where the dimension $n=4$.
Using a similar approach,   we show that the Hodge conjecture holds for the rationally elliptic  fourfold $X \subset\mathbb{P}^8$, and the rationally elliptic  fourfold $X \subset\mathbb{P}^8$ has non-positive Hodge level.
\end{abstract}

\maketitle


\section{introduction}

 A simply-connected closed manifold $X$ is \emph{rationally elliptic}  (or \emph{of  elliptic homotopy type}) if
$$
\sum\limits_{k\geq 2}\dim(\pi_{k}(X)\otimes_{\mathbb Z }\mathbb Q)< +\infty.
$$
where $\pi_{k}(X)$ is the $k$-th homotopy group of $X$.
The rationally elliptic manifolds have nice properties and satisfy strong restrictions.
The K\"ahler manifolds have rich structures,
this may remind people of paying attention to the K\"ahler manifolds with elliptic homotopy type.
The characterization of rationally elliptic K\"ahler manifolds of dimension $\leq 3$ were studied by Amoros-Biswas (c.f.\cite{AB}).
In \cite{SY}, Su and the author obtained all possible Hodge structures of rationally elliptic  K\"ahler fourfolds and a partial characterization of rationally elliptic  K\"ahler manifold of  higher dimension.

In this paper, we are interested in rationally elliptic projective manifolds.
It is well-known that every $n$-dimensional projective manifold $X$ is holomorphically embedding into $\mathbb{P}^{2n+1}$ (c.f.\cite[P.173]{GH}).
Consider a general projection $p:X \to X'\subset \mathbb P^{2n}$ which is induced by a holomorphic embedding $X \to \mathbb{P}^{2n+1}$.
It is shown in \cite{GP} that we can  suitable choice of the general projection such that $X'$ has only finite ordinary double points as singularities. Since the  morphism $p$ is regular birational map, by \cite[Proposition 16.8]{HAR}, if the fiber $p^{-1}(q),q\in X'$ is  disconnected,then $q$ is a singular point. This indicates that
 such general projection $p$ is an immersion.  Using the Whitney trick (c.f.\cite[Subsection 7.3]{RA}), we can kill these ordinary double points of $X'$ and change the immersion $p$ into a smooth embedding.
From this viewpoint, since the rationally elliptic projective manifolds are simply connected and have strong restrictions, it is natural to ask  {\it whether a $n$-dimensional rationally elliptic projective manifold is holomorphically embedding into $\mathbb{P}^{2n}$? }
 The answer is positive for $n=1$,  and it is yet unknown for $n>1$.
Motivated by this question, we will further study the geometry and topology of rationally elliptic K\"ahler fourfolds which can holomorphically embed into $\mathbb{P}^8$.

\subsection{Topological characterization of $\mathbb{P}^4$}\label{subsec 1.1}

 The topological characterization of the complex projective space $\mathbb{P}^n$ is a long-standing problem.   Severi raised the question whether a complex surface homeomorphic to $\mathbb{P}^2$ has to be biholomorphic to $\mathbb{P}^2$.
The answer was affirmative by Yau (c.f.\cite{YST}).
In higher dimensional cases, it was solved by Hizebruch and Kodaira (c.f.\cite{HKK}), and Yau (c.f.\cite{YST})
 for K\"{a}hler manifolds.
 The next natural question is: {\it whether the hypothesis ``homeomorphism" on K\"{a}hler manifolds  can be relaxed?}
This is our main motivation.
As a matter of fact, there were many interesting works on this topic, such as  Debarre \cite{De}, Fujita \cite{F80}, Li \cite{LP},  Libgober and Wood \cite{LW}), Lanteri and Struppa \cite{LS} and so on.
The common feature, which they focused on, is that a K\"{a}hler manifold $X$ has the same integral cohomology ring as $\mathbb{P}^n.$
In this paper, ``the  same integral cohomology ring as $\mathbb{P}^n$" can be relaxed by  ``the  same rational cohomology ring as $\mathbb{P}^n$".

A \emph{$\mathbb Q$-homological projective space}  is a compact K\"ahler manifold $X$  with  the same Betti numbers as $\mathbb{P}^{n}$, i.e., $X$ has the same rational cohomology as $\mathbb{P}^n$ (c.f.\cite{Wi}).  The classification of the  $\mathbb Q$-homological projective space of dimension up to $3$ is well-known (cf.,\cite{PY07,Wi,YSK}). One of the famous work is the classification of all fake projective planes  by  Prasad and Yeung (c.f.\cite{PY07}).
It is shown in \cite{Wi} that the   $\mathbb Q$-homological projective four-space is either isomorphic to $\mathbb{P}^4$, or to another variety with pre-described invariants.    Furthermore, some interesting examples of non-simply connected $\mathbb Q$-homological projective four-spaces were constructed  by Prasad and Yeung (c.f.\cite{PY09}). In  \cite{YSK},  Yeung gives criteria for a fake projective four-space to be uniformized by the complex hyperbolic space of complex dimension four. So far the existence of simply-connected fake projective four-space is yet unknown.

In this paper, we focus on the simply-connected $\mathbb Q$-homological projective four-space.  It is the simplest  rationally elliptic K\"ahler manifold for Hodge structure.  From Lemma \ref{lem:RHPSB}, we prove that there is no simply connected $\mathbb Q$-homological projective four-space $X\subset \mathbb{P}^8$  with the first Chern class $c_{1}(X)<0$.
 It was raised by Wilson \cite{Wi} the question { \it whether a $\mathbb Q$-homological projective $n$-space $X$ with Kodaira dimension $k(X)\neq n$ is isomorphic to $\mathbb{P}^n$?}  In particular, for the case $n=4$ is still unsolved.
Based on the method of the proof of Lemma \ref{lem:RHPSB}, we conclude that if the first Chern class $c_{1}(X)>0$, the only   $\mathbb Q$-homological projective four-space $X$ is $\mathbb{P}^4$  (see Lemma \ref{lem:RHPSS}).
This gives a positive answer to the question of Wilson for $n=4$.
 More precisely, we obtain the following characterization of $\mathbb{P}^4$ by Betti numbers.

\begin{theorem}\label{thm:RHPS}
A simply-connected $\mathbb Q$-homological projective four-space $X\subset\mathbb{P}^8$  is biholomorphic to $\mathbb{P}^{4}$.
\end{theorem}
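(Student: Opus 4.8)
The plan is to deduce the theorem from the two lemmas quoted above by a trichotomy on the sign of $c_1(X)$, supplemented by a short Hodge-theoretic argument that disposes of the Calabi--Yau case.

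First I would fix the relevant algebra. Since $X$ is simply connected with the Betti numbers of $\mathbb{P}^4$, we have $b_1(X)=0$, hence $h^{1,0}(X)=0$, and $b_2(X)=1$, so the universal coefficient theorem gives $H^2(X;\z)\cong\z$; as $X$ is projective (it lies in $\mathbb{P}^8$), a generator $H$ may be taken ample. The exponential sequence together with $H^1(X,\mathcal{O}_X)=0$ shows $\operatorname{Pic}(X)\hookrightarrow H^2(X;\z)$, so $K_X=\lambda H$ for some $\lambda\in\z$; consequently exactly one of the three cases $c_1(X)>0$ (i.e. $\lambda<0$, so $X$ Fano), $c_1(X)=0$ (i.e. $\lambda=0$), or $c_1(X)<0$ (i.e. $\lambda>0$, so $K_X$ ample and $\kappa(X)=4$) occurs. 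I would also record the Hodge diamond: each power $H^{j}$ is a nonzero class in $H^{j,j}(X)$, and since $b_{2j}(X)=1$ this forces $h^{j,j}(X)=1$ and $h^{p,q}(X)=0$ for $p\neq q$; in particular $h^{4,0}(X)=0$.

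The three cases are then handled as follows. If $\lambda=0$, then $\mathcal{O}_X(K_X)$ has trivial image in $H^2(X;\z)$, hence $\mathcal{O}_X(K_X)\cong\mathcal{O}_X$ by the injectivity of $\operatorname{Pic}(X)\hookrightarrow H^2(X;\z)$, so $h^{4,0}(X)=h^0(X,\mathcal{O}_X(K_X))=1$, contradicting $h^{4,0}(X)=0$; thus this case is vacuous. (Alternatively, the Beauville--Bogomolov decomposition forbids it: a simply connected compact K\"ahler fourfold with $c_1=0$ would contain an irreducible Calabi--Yau or hyperk\"ahler factor, each incompatible with $b_2(X)=1$.) If $\lambda>0$, i.e. $c_1(X)<0$, then Lemma~\ref{lem:RHPSB} says no such $X$ embeds in $\mathbb{P}^8$, so this case does not arise either. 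In the remaining case $\lambda<0$, i.e. $c_1(X)>0$, Lemma~\ref{lem:RHPSS} yields that $X$ is biholomorphic to $\mathbb{P}^4$. Hence $X\cong\mathbb{P}^4$; this also settles Wilson's question for $n=4$, since $\kappa(X)\neq 4$ is precisely the condition $c_1(X)\geq 0$.

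The substantive content is entirely contained in Lemmas~\ref{lem:RHPSB} and~\ref{lem:RHPSS}, and the harder of the two is Lemma~\ref{lem:RHPSB}, whose proof exploits the Euler characteristic and Chern number identities for the normal bundle $N_{X/\mathbb{P}^8}$ of the embedding $i\colon X\to\mathbb{P}^8$. So that is where the real obstacle lies; at the level of Theorem~\ref{thm:RHPS} the only things needing care are that the three sign cases are exhaustive (which rests on $X$ having Picard rank one) and that $c_1(X)=0$ cannot occur for a $\mathbb{Q}$-homological projective four-space --- the one small computation performed above.
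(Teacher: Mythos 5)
Your proposal is correct and follows essentially the same route as the paper: Theorem \ref{thm:RHPS} is deduced from Lemma \ref{lem:RHPSB} and Lemma \ref{lem:RHPSS} by a trichotomy on the sign of $c_1(X)$, with all the substantive work residing in those two lemmas. The only (cosmetic) difference is in excluding $c_1(X)=0$: you argue directly that triviality of $K_X$ would give $h^{4,0}(X)=1$, contradicting the Hodge diamond forced by $b_{2j}(X)=1$, whereas the paper simply cites \cite[Corollary 1.4]{SY} for the nonvanishing of $c_1$ on rationally elliptic projective fourfolds; both are valid.
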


\subsection{The Hodge conjecture}\label{subsec 1.2}

The Hodge conjecture is a major unsolved problem in algebraic geometry and complex geometry.  Let
\[
Hdg^{2k}(X)=H^{2k}(X;\mathbb Q)\cap H^{k,k}(X)
\]
be the group of Hodge classes of degree $2k$ on  complex projective manifold $X$.

\begin{conj}[Hodge conjecture] \label{Conj} \cite{LG}
Let $X$ be a non-singular complex projective manifold.
Then every Hodge class on $X$ is  a linear combination with rational coefficients of the cohomology classes of complex  subvarieties (algebraic cycles) of $X$.
\end{conj}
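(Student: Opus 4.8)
The statement is the Hodge conjecture in full generality, so the plan is to reduce it as far as the classical Lefschetz machinery allows and then isolate the input that is genuinely missing. Fix $X$ of complex dimension $n$ with hyperplane class $\omega$, and let $L$ denote the Lefschetz operator given by cup product with $\omega$; since $\omega$ is algebraic, $L$ carries classes of subvarieties to classes of subvarieties. The Lefschetz $(1,1)$-theorem already settles degree $2$: every class in $H^{1,1}(X)\cap H^{2}(X;\mathbb{Q})$ is a $\mathbb{Q}$-combination of divisor classes. Applying the Hard Lefschetz theorem, the iterate $L^{n-2}\colon H^{2}(X;\mathbb{Q})\to H^{2n-2}(X;\mathbb{Q})$ is an isomorphism of Hodge structures up to Tate twist; combined with the degree-$2$ case and the fact that $L$ preserves algebraicity, this proves the conjecture in degree $2n-2$. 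More generally, the primitive (Lefschetz) decomposition of $H^{2k}(X;\mathbb{Q})$ into $\bigoplus_{j} L^{j}\,P^{2k-2j}(X)$ lets me replace an arbitrary Hodge class by its primitive components and lets me reduce codimension $k$ with $2k>n$ to codimension $n-k$. Thus everything is reduced to \emph{primitive Hodge classes in the range $2\le 2k\le n$}.

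For those classes I would attempt to manufacture cycles by an induction on $\dim X$ through a Lefschetz pencil $\{X_t\}$ of hyperplane sections: analyze the associated variation of Hodge structure, control the vanishing cohomology, and try to propagate algebraicity from a generic smooth section $X_t$ (one dimension lower, where the conjecture is assumed) up to $X$ via the incidence/cylinder correspondence and the Weak Lefschetz theorem. Equivalently, and more structurally, I would seek an algebraic correspondence on $X\times X$ whose induced action realizes the desired class; producing the algebraic ``inverse'' of $L^{n-2k}$ in this way is exactly Grothendieck's Lefschetz-type Standard Conjecture $B(X)$, which if available would collapse the whole problem to the primitive middle degree. In the presence of extra structure one does have concrete constructions, such as the Kuga--Satake correspondence for weight-two Hodge structures of $K3$ type or the realization of classes as fundamental cycles of Noether--Lefschetz loci, and these settle the conjecture in a number of families.

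The hard part, and the reason the statement is a conjecture rather than a theorem, is precisely the surviving step: given a primitive class $\alpha\in H^{k,k}(X)\cap H^{2k}(X;\mathbb{Q})$ with $1<k<n$, there is no general construction of a subvariety (or correspondence) whose class equals $\alpha$. Every reduction above is formal and Hodge-theoretic---it transports algebraicity along isomorphisms of Hodge structures but never creates a new algebraic cycle---so it cannot by itself bridge the gap between the analytic Hodge condition and the existence of honest algebraic subvarieties. A complete proof for arbitrary $X$ would require a fundamentally new source of cycles, essentially a proof of the Standard Conjectures, that the Lefschetz toolkit does not supply. Consequently the realistic content of any rigorous argument is to establish the conjecture unconditionally in degrees $2$ and $2n-2$ and, in the remaining middle degrees, only under additional hypotheses on $X$---which is the route taken in this paper for the rationally elliptic fourfold $X\subset\mathbb{P}^{8}$, where the constraints forced by rational ellipticity and the embedding in $\mathbb{P}^{8}$ pin down the Hodge structure tightly enough to exhibit the required cycles.
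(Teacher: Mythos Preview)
The statement you were asked to prove is \emph{the Hodge conjecture} itself, stated in the paper as Conjecture~\ref{Conj}. The paper does not prove this statement; it is recorded precisely as an open conjecture, and the paper's contribution (Theorem~\ref{thm:hodge}) is only the special case of rationally elliptic fourfolds $X\subset\mathbb{P}^8$. So there is no ``paper's own proof'' of this statement against which to compare your proposal.

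Your write-up is not a proof either, and to your credit you say so explicitly: you carry out the standard Lefschetz reductions (degree $2$ via the $(1,1)$-theorem, degree $2n-2$ via Hard Lefschetz, reduction to primitive classes in the middle range), and then correctly identify that the remaining step---producing algebraic cycles representing primitive Hodge classes in intermediate degrees---is exactly the content of the conjecture and is not accessible by the Lefschetz toolkit alone. That diagnosis is accurate; invoking Standard Conjecture $B(X)$ or Kuga--Satake only reformulates or specializes the problem rather than solving it. The final paragraph, where you defer to the paper's hypotheses on $X$, is the honest endpoint: under those hypotheses the paper argues (Lemma~\ref{lem:hodgeconj}) by showing that for the one remaining Hodge diamond the classes $c_1^2(X)$ and $c_2(X)$ must span $Hdg^4(X)$, via a Chern-number and self-intersection computation, not via any of the Lefschetz-pencil or correspondence machinery you sketch. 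In short, there is no gap in your reasoning beyond the one you yourself name, but what you have written is an outline of why the conjecture is hard, not a proof of it, and the paper makes no claim to prove it either.
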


This conjecture holds for $X$ of the dimension up to $3$, and there are few results on projective fourfold; for example, hypersurfaces with degree $d\leq 5$ (c.f.\cite{LG}), projective uniruled fourfold (c.f.\cite{CM}), varieties with the isomorphism map $cl_X: CH_{\ast}(X)\rightarrow H_{\ast}(X)$ (c.f.\cite[Example 19.1.11]{FU}). Recall that the odd Betti numbers are all zero  for rationally elliptic K\"ahler fourfold (c.f.\cite[Theorem 1.1]{SY}). It was proved in \cite[Corollary 1.2]{SY} that  for a rationally elliptic  projective fourfold, the Hodge conjecture is true, except for  the case that  the even dimensional  Hodge diamond is
\begin{align}\label{E hodge-diamond-1}
\begin{array}{ccccc}
& & 1 & & \\
& 0 & 1 & 0 & \\
0 & 0 & 2 & 0 & 0\\
& 0 & 1 & 0 & \\
& & 1 & &
\end{array}
\end{align}
 In this paper, we prove that if the rationally elliptic fourfold $X\subset\mathbb{P}^8$, then the Hodge conjecture also holds for this case.
 Thus we obtain the following result.

\begin{theorem}\label{thm:hodge}
The Hodge conjecture holds  for  fourfold $X\subset\mathbb{P}^8$  of elliptic homotopy type.
\end{theorem}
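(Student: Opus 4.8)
The plan is to reduce, via the classification already available, to a single Hodge-theoretic case and then dispose of that case by a normal-bundle computation in the spirit of Lemma~\ref{lem:RHPSB}. By \cite[Corollary~1.2]{SY} the Hodge conjecture already holds for every rationally elliptic projective fourfold whose even Hodge diamond is not \eqref{E hodge-diamond-1}, so I may assume $X$ carries the diamond \eqref{E hodge-diamond-1}. As the odd Betti numbers of a rationally elliptic K\"ahler fourfold vanish (\cite[Theorem~1.1]{SY}), \eqref{E hodge-diamond-1} tells us that $H^{\ast}(X;\mathbb Q)$ sits in even degrees and is purely of type $(p,p)$, so $Hdg^{2k}(X)=H^{2k}(X;\mathbb Q)$ for every $k$ and it remains only to exhibit algebraic cycles spanning each of these groups. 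Degrees $0$ and $8$ are spanned by $[X]$ and a point; degree $2$ is $\mathrm{NS}(X)_{\mathbb Q}$ by the Lefschetz $(1,1)$-theorem; and degree $6$ is $\mathbb Q\,h^{3}$ by the Hard Lefschetz isomorphism $h^{2}\cup(-)\colon H^{2}\xrightarrow{\sim}H^{6}$, with $h=i^{\ast}c_{1}(\mathcal O_{\mathbb P^{8}}(1))$ and $h^{3}$ the class of the curve $X\cap H_{1}\cap H_{2}\cap H_{3}$ cut out by general hyperplanes. The only remaining group is the two-dimensional $H^{4}(X;\mathbb Q)$; I will handle it — and in fact the whole theorem — by proving that $X$ is uniruled and invoking the uniruled case of the conjecture \cite{CM}.

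The crux is therefore to show $c_{1}(X)>0$. Since $b_{2}(X)=1$, write $c_{1}(X)=\gamma h$ with $\gamma\in\mathbb Q$ and put $D=\deg X=\int_{X}h^{4}>0$. From the Whitney relation $c(T_{X})\,c(N)=i^{\ast}(1+h)^{9}$, where $N:=N_{X/\mathbb P^{8}}$ is the rank-$4$ normal bundle, one expresses $c_{1}(N),\dots,c_{4}(N)$ as explicit polynomials in $h$, $c_{2}(X)$, $c_{3}(X)$, $c_{4}(X)$; the self-intersection formula supplies the further identity $\int_{X}c_{4}(N)=D^{2}$; and $N$, being a quotient of the globally generated bundle $i^{\ast}T_{\mathbb P^{8}}$ (Euler sequence), is globally generated, so all of $c_{1}(N),\dots,c_{4}(N)$ are nef. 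These data are then inserted into the numerical constraints forced by \eqref{E hodge-diamond-1}: the Euler-characteristic identity $\int_{X}c_{4}(X)=\chi_{\mathrm{top}}(X)=6$ and the Hirzebruch--Riemann--Roch relations $\chi(\mathcal O_{X})=1$, $\chi(\Omega^{1}_{X})=-1$, $\chi(\Omega^{2}_{X})=2$ (equivalently $\sigma(X)=2$), each of which becomes a relation among the Chern numbers of $X$. Pushing this through should force $\gamma>0$, i.e.\ $-K_{X}$ ample; a Fano fourfold is uniruled, and the Hodge conjecture for $X$ then follows from \cite{CM}. (For $H^{4}$ alone one may argue more concretely: the algebraic classes $h^{2}=[X\cap H_{1}\cap H_{2}]$ and $c_{2}(T_{X})$ span $H^{4}(X;\mathbb Q)$ unless $c_{2}(T_{X})\in\mathbb Q\,h^{2}$, and in that degenerate case the same bookkeeping should identify $X$ with the smooth quadric fourfold, whose cohomology is wholly algebraic.)

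The main obstacle is precisely this last deduction, ruling out $c_{1}(X)\le 0$. In the setting of Lemma~\ref{lem:RHPSB} one has $b_{4}=1$, so $c_{2}(X)$ is itself a rational multiple of $h^{2}$ and every Chern number of $X$ is a function of $\gamma$ and $D$ alone, whereas here $b_{4}(X)=2$ forces the intersection numbers $\int_{X}c_{2}(X)h^{2}$, $\int_{X}c_{2}(X)^{2}$ and $\int_{X}c_{3}(X)h$ to enter as independent unknowns, so the Euler characteristic no longer closes the system. The remedy I anticipate is to use the Riemann--Roch identities for $\Omega^{1}_{X}$ and $\Omega^{2}_{X}$ in tandem with the nefness of $c_{1}(N),\dots,c_{4}(N)$; making this finite system of equalities and inequalities close — and so pinning down the sign of $\gamma$ — is the technical heart of the argument, after which the rest is routine.
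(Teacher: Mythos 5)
Your reduction to the exceptional diamond \eqref{E hodge-diamond-1} via \cite[Corollary 1.2]{SY}, and your treatment of the groups $Hdg^{0}$, $Hdg^{2}$, $Hdg^{6}$, $Hdg^{8}$, agree with the paper. The gap is exactly where you flag it: your main route for $H^{4}$ requires proving $c_{1}(X)>0$, and nothing in your sketch delivers this. It is also a strictly stronger statement than what is true or needed. The paper does \emph{not} exclude $c_{1}(X)<0$ for a fourfold with diamond \eqref{E hodge-diamond-1}; it splits into $c_{1}(X)>0$, where \cite{CM} applies, and $c_{1}(X)<0$, where it proves only that $Hdg^{4}(X)=H^{4}(X;\mathbb Q)$ is spanned by the algebraic classes $c_{1}^{2}(X)$ and $c_{2}(X)$ (Lemma \ref{lem:hodgeconj}). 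Note moreover that the degenerate proportionality $c_{2}(X)\in\mathbb Q\,c_{1}^{2}(X)$ genuinely occurs on the Fano side: the quadric $Q^{4}\subset\mathbb P^{5}$ has the diamond \eqref{E hodge-diamond-1} and $c_{2}=\tfrac{7}{16}c_{1}^{2}$. So no amount of bookkeeping with nef Chern classes of $N_{X}$ can exclude the degenerate case unconditionally; the sign of $c_{1}$ must be an input, not an output.

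The correct completion is essentially your parenthetical fallback, made precise and restricted to $c_{1}(X)<0$. Assume for contradiction that $c_{2}(X)=k\,c_{1}^{2}(X)$ with $k\in\mathbb Q$ (this is the only way $c_{1}^{2}$ and $c_{2}$ can fail to span the two-dimensional $H^{4}$, since $b_{2}=1$ forces $c_{1}^{2}\in\mathbb Q\,g^{2}$). Under this hypothesis the ``independent unknowns'' you worry about ($\int_{X}c_{2}h^{2}$, $\int_{X}c_{2}^{2}$, $\int_{X}c_{3}h$) all collapse to functions of $(d,r,k)$, and the system \emph{does} close: $\chi_{\mathrm{top}}=6$, $\chi(\mathscr O_{X})=1$ and $\chi^{1}(X)=-1$ give $\langle c_{1}c_{3},[X]\rangle=48$ and $(3k^{2}+4k-1)\langle c_{1}^{4},[X]\rangle=678$, which together with the Miyaoka--Yau inequality, Kneser's diagonalization of the (positive-definite, rank-two) intersection form on $H^{4}$, and integrality leave finitely many triples. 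Each surviving triple is then killed by comparing the two computations of $\chi(N_{X})$ — from $c(N_{X})=i^{\ast}c(\mathbb P^{8})/c(X)$ and from the self-intersection formula $\chi(N_{X})=d^{2}m^{8}$ — which produce a degree-eight integral polynomial equation in $m$ with no positive root (Lemma \ref{lem:2}). In particular your guess that the degenerate case ``should identify $X$ with the smooth quadric'' is not what happens: for $c_{1}(X)<0$ the degenerate case is simply empty, and the conclusion is that $c_{1}^{2}$ and $c_{2}$ span $Hdg^{4}(X)$, not that $X$ is Fano.
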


\subsection{The Hodge level}\label{subsec 1.3}

For a complex manifold $X$, the\emph{ Hodge level} of $H^{k}(X;\mathbb C)$ is defined to be the largest number $\mid q-p\mid$ such that $p+q=k$ and $h^{p,q}\neq 0$, and to be $-\infty$ if $H^{k}(X;\mathbb C)=0$ (see \cite{Ra}).

It is shown in \cite[Theorem 1.1,Theorem 1.3]{AB} that the Hodge level of a  compact K\"ahler manifold with elliptic homotopy type is $\leq 0$ in dimension $\leq 3$. For the  compact K\"ahler fourfold of elliptic homotopy type, the result in \cite[Proposition 4.2]{SY} implies  that the Hodge level is $\leq 0$, except for the case that  the even dimensional Hodge diamond is
\begin{align}\label{E hodge-level}
\begin{array}{ccccc}
& & 1 & & \\
& 1 & 2 & 1 & \\
0 & 2 & 2 & 2 & 0\\
& 1 & 2 & 1 & \\
& & 1 & &
\end{array}
\end{align}
(see case (g) in \cite[Theorem 1.3]{SY}).
We prove that there is no rationally elliptic  fourfold $X\subset\mathbb{P}^8$ in such case. Consequently, we have the following Theorem.

\begin{theorem}\label{thm:level} Let $X\subset\mathbb{P}^8$ be a  projective fourfold of elliptic homotopy type. Then the Hodge level of $X$ is $\leq 0$.
\end{theorem}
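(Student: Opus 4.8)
The plan is to reduce the statement, via the Hodge-theoretic classification already in hand, to a single residual Hodge diamond, and then to exclude that case with the normal-bundle bookkeeping used for Theorem \ref{thm:RHPS}.

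\textbf{Step 1 (reduction).} By \cite[Proposition 4.2]{SY} (see also case (g) of \cite[Theorem 1.3]{SY}), a compact K\"ahler fourfold of elliptic homotopy type has Hodge level $\leq 0$ unless its even Hodge diamond is \eqref{E hodge-level}; so it suffices to show no projective fourfold $X\subset\mathbb{P}^8$ of elliptic homotopy type has diamond \eqref{E hodge-level}. Assume for contradiction that $X$ is such a fourfold, write $h=i^*c_1(\mathcal O_{\mathbb{P}^8}(1))$ and $d=\deg X=h^4[X]$. Since all odd Betti numbers vanish (\cite[Theorem 1.1]{SY}), reading off \eqref{E hodge-level} gives $\chi_{\mathrm{top}}(X)=16$, $\chi(\mathcal O_X)=2$, $\chi(X,\Omega^1_X)=-4$, $\chi(X,\Omega^2_X)=4$, and signature $\sigma(X)=\sum_{p,q}(-1)^q h^{p,q}=0$.

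\textbf{Step 2 (Chern numbers).} I would then apply Hirzebruch--Riemann--Roch to $\mathcal O_X$, $\Omega^1_X$, $\Omega^2_X$, together with the signature theorem $\sigma(X)=\tfrac1{45}(7p_2-p_1^2)[X]$, to obtain a linear system in the five Chern numbers $c_1^4,\,c_1^2c_2,\,c_1c_3,\,c_2^2,\,c_4$ of $X$. Solving it yields $c_4(X)[X]=16$, $c_1(X)c_3(X)[X]=112$, and a single surviving linear relation among $c_1^4$, $c_1^2c_2$, $c_2^2$. What is not yet determined is $d$ together with the ``sectional'' mixed numbers $h^3c_1,\,h^2c_1^2,\,h^2c_2,\,hc_1c_2,\,hc_1^3,\,hc_3$, i.e. the coefficients of the Hilbert polynomial $\chi(\mathcal O_X(k))$; these are the free parameters the embedding must pin down.

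\textbf{Step 3 (normal bundle).} Let $N=N_{X/\mathbb{P}^8}$, a rank-$4$ bundle which is globally generated (a quotient of $T\mathbb{P}^8|_X$, itself a quotient of $\mathcal O(1)^{\oplus 9}$), hence nef, with $c(N)=(1+h)^9\,c(TX)^{-1}$ and $c_1(N)=9h-c_1(X)$ by adjunction. Two identities should drive the argument: the self-intersection identity $\int_X c_4(N)=d^2$ (from $i^*i_*1=c_4(N)$ and $i_*1=d\,H^4$ in $\mathbb{P}^8$), and the positivity estimates $\int_X s_\lambda(N)\geq 0$ for Schur polynomials $s_\lambda$ of weight $4$, supplemented if needed by $\chi(X,N)$, $\chi(X,\mathrm{Sym}^2N)$, $\chi(X,\wedge^2N)$. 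Expanding $c_4(N)$ by the formula above and inserting $c_4=16$, $c_1c_3=112$ turns the self-intersection identity into an equation linking $d$ with the sectional numbers; feeding in the nef inequalities for $N$ and the positivity/integrality of the Hilbert polynomial of $X$ (and, as a fallback, cutting $X$ by a general $\mathbb{P}^6$ to a surface $S\subset\mathbb{P}^6$ with $q(S)=0$, $p_g(S)\geq 1$, $K_S=(2h-c_1(X))|_S$, and invoking Noether's inequality or the Bogomolov--Miyaoka--Yau inequality on $S$) should confine the parameters to an empty range. That contradiction excludes \eqref{E hodge-level} and proves the theorem.

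\textbf{Main obstacle.} Unlike in Theorem \ref{thm:RHPS}, the cohomology ring of $X$ is not generated by $h$, and \eqref{E hodge-level} by itself fixes neither $d$ nor the Hilbert polynomial, so the numerical system is genuinely underdetermined until the positivity of $N$ (nefness, and effectivity of the relevant direct images) is brought in. The delicate point will be to make those positivity inequalities sharp enough to collide with $\int_X c_4(N)=d^2$ — that is, to show the admissible region for $(d,\,c_1^4,\,c_1^2c_2,\,c_2^2,\dots)$ is empty; the surface-section route trades this for an equally fiddly classification of the possible $S\subset\mathbb{P}^6$.
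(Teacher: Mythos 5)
Your Step 1 reduction and the basic numerology ($\chi_{\mathrm{top}}(X)=16$, $\chi(\mathcal O_X)=2$, $\chi^1(X)=-4$, hence $\langle c_1c_3,[X]\rangle=112$) agree with the paper, and the identity $\langle c_4(N_X),[X]\rangle=d^2$ from the self-intersection formula is indeed the engine of the paper's elimination. But the difficulty you flag at the end --- that with $h^{1,1}(X)=2$ the system is ``genuinely underdetermined'' --- is precisely where the paper has a key idea that your outline is missing, and your proposed substitutes (nefness and Schur positivity of $N_X$, positivity of the Hilbert polynomial, Noether or Bogomolov--Miyaoka--Yau on a surface section) are not carried out and there is no evidence they would empty the parameter region. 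The paper's Lemma \ref{lem:level} proves that, despite $h^{1,1}=2$, the spaces of \emph{rational} Hodge classes satisfy $hdg^{2}(X)=hdg^{4}(X)=1$: writing $H^{1,1}=\langle\omega,x\rangle$ with $\omega$ a rational K\"ahler class and $x$ primitive, if $hdg^4(X)=2$ one produces a rational algebraic class proportional to $\omega x$ (the Hodge conjecture is already known for the diamond \eqref{E hodge-level} by \cite{SY}), which is orthogonal to $\omega^2$ and hence of positive square by the Hodge index theorem --- contradicting the Hodge--Riemann relation $\langle\omega^2x^2,[X]\rangle<0$.

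This one lemma restores exactly the structure you lack: $c_1(X)=rg$ and $c_2(X)=k\,c_1(X)^2$ for the positive generator $g$ of $Hdg^2(X)\cap H^2(X)$, and $i^{*}t=mg$ for the hyperplane class $t$, so the degree and all mixed sectional numbers are functions of $(d,r,k,m)$ just as in Lemma \ref{lem:RHPSB}. The relation $(3k^2+4k-1)\langle c_1^4,[X]\rangle=1344$, Miyaoka--Yau ($k>\tfrac25$), and the divisibility constraints then leave five cases; one ($d=14$, $r=-2$, $k=1$, where $c_1$ even makes $X$ spin) is excluded by integrality of the $\hat A$-genus, and the remaining four by showing the degree-$8$ equation $\langle c_4(N_X),[X]\rangle=d^2m^8$ has no positive integer root via congruences. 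Without Lemma \ref{lem:level}, or a fully worked replacement for it, your argument does not close.
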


\subsection{Idea of proof}\label{subsec general-idea}
The idea for the proof of Theorems \ref{thm:RHPS},  \ref{thm:hodge} and \ref{thm:level} is sketched as follows.
Consider a simply-connected  projective fourfold $X\subset\mathbb{P}^8$ with elliptic homotopy type.
Under the assumptions on $X$ of Theorems \ref{thm:RHPS}, \ref{thm:hodge} and \ref{thm:level},  we first get all the possible Chern classes of $X$ by using some formulas of the Chern numbers.
The next key step is to eliminate all impossible cases of Chern classes case-by-case.
To achieve this, we first compute the Euler characteristic of the normal bundle $N_{X}\subset i^{\ast}T_{\mathbb{P}^8}$ for a holomorphic embedding $i:X \to \mathbb{P}^8$ by the following two methods:
\begin{itemize}
\item[(1)] according to the decomposition of complex vector bundle $i^{\ast}T_{\mathbb{P}^{8}}=T_{X}\oplus N_{X}$, we have the relation   $i^{\ast}c(\mathbb{P}^8)=c(X)\cdot c(N_{X})$, and consequently the Euler characteristic
    \[
    \chi(N_{X})=\langle c_{4}(N_{X}),[X] \rangle
    \]
     follows from the known Chern classes $i^{\ast}c(\mathbb{P}^8), c(X)$,
\item[(2)] from self-intersection formula  (c.f.\cite{HA},\cite{RA}),  we have that  the Euler characteristic
\[
\chi(N_{X})=\lambda(i,i),
\]
where $\lambda(i,i)$ is the self-intersection number of $i_{\ast}[X]$ in $\mathbb P^8$.
\end{itemize}
and then by comparing the resulting formulas, we eliminate all impossible cases of Chern classes of $X$.

\subsection{Structure of this paper}
We  apply the idea sketched in \S \ref{subsec general-idea} to prove  Theorem \ref{thm:RHPS}, Theorem \ref{thm:hodge} and Theorem \ref{thm:level}  in \S \ref{sec:2}, \S \ref{sec:3} and \S \ref{sec:4} respectively.
Moreover, some technical results on the polynomial equations with integral coefficients needed in the proof are presented in the appendix \S \ref{sec:5}.

\

\noindent \emph{Acknowledgement.}
We would like to thank Cheng-Yong Du,  Yang Su and Song Yang for helpful communications. We also would like to thank Sai-Kee Yeung for providing us with some useful references about the fake projective spaces.

\section{Proof of  Theorem \ref{thm:RHPS}}\label{sec:2}

 First of all, we notice that the first Chern class $c_{1}(X)\neq 0$ for a rationally elliptic projective fourfold $X$ (c.f.\cite[Corollary 1.4]{SY}).
Since the second Betti number $b_{2}(X)=1$ of a $\mathbb Q$-homological projective four-space $X$,
so either $c_{1}(X)<0$ or $c_{1}(X)>0$.
To finish the proof of Theorem \ref{thm:RHPS},
we need to rule out the case $c_{1}(X)<0$.

\begin{lemma}\label{lem:RHPSB}
There does not exist a simply-connected $\mathbb Q$-homological   projective four-space $X\subset\mathbb{P}^8$ with $c_{1}(X)<0$.
\end{lemma}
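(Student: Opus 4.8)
The strategy is exactly the one sketched in \S\ref{subsec general-idea}. Write $H^2(X;\mathbb Q)=\mathbb Q\cdot h$, where $h$ is a positive generator normalized so that $\langle h^4,[X]\rangle = d > 0$ is the degree of $X$ in $\mathbb{P}^8$. Since $b_2(X)=1$ we may write $c_1(X) = a\,h$ with $a \in \mathbb{Z}$, and the hypothesis $c_1(X)<0$ means $a<0$. Similarly $c_2(X)=c\,h^2$, $c_3(X)=e\,h^3$, $c_4(X) = \chi(X)\,h^4/d$ (recall $\chi(X)=6$ from the Betti numbers of $\mathbb{P}^4$), with $c,e \in \mathbb Q$ determined by $a,d$ through the standard Chern-number constraints. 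First I would assemble all the numerical relations available: the Hirzebruch--Riemann--Roch / Noether-type formulae for a fourfold (the $\chi(\mathcal{O}_X)$ formula, the index theorem / signature formula, and $\langle c_4(X),[X]\rangle=\chi(X)=6$), together with the fact that $X$ is rationally elliptic so $\chi(\mathcal O_X)=1$ and the Hodge numbers are pinned down (odd Betti numbers vanish, $h^{p,q}$ constrained as in \cite{SY}). These combine to cut the possible $(a,d)$ down to a finite list — this is the ``get all possible Chern classes'' step.

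The core of the argument is the double computation of $\chi(N_X)$ for the normal bundle $N_X$ of the embedding $i:X\to\mathbb{P}^8$. On one hand, from $i^*c(\mathbb{P}^8) = (1+i^*H)^9 = c(X)\cdot c(N_X)$ (here $i^*H = \alpha\, h$ for the hyperplane class $H$ on $\mathbb{P}^8$, with $\alpha\,h$ the restricted hyperplane class and $d = \alpha^4\langle h^4,[X]\rangle$ fixing $\alpha$), one solves formally for $c(N_X) = i^*c(\mathbb{P}^8)/c(X)$ in the truncated ring $\mathbb Q[h]/(h^5)$, extracts $c_4(N_X)$, and pairs with $[X]$ to get $\chi(N_X)$ as an explicit rational function of $a$ and $d$. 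On the other hand, the self-intersection formula gives $\chi(N_X) = \lambda(i,i) = \langle i^*i_*[X],[X]\rangle = \langle i^*(H^5)\cdot\text{(class of }X),[X]\rangle$ type expression; since $[X]\in H_8(\mathbb{P}^8)\cong\mathbb Z$ is $d$ times the class of a $\mathbb{P}^4$, the self-intersection number $\lambda(i,i)$ is likewise an explicit expression in $d$ (essentially $d^2$ times the self-intersection of a linear $\mathbb{P}^4$ in $\mathbb{P}^8$, which is $1$, so $\lambda(i,i)=d^2$ up to the precise bookkeeping). Equating the two expressions for $\chi(N_X)$ yields one more Diophantine constraint relating $a$ and $d$.

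The final step is elimination: combine this last equation with the finite list of candidate $(a,d)$ coming from the Chern-number constraints and show that no solution has $a<0$. I expect this to reduce to checking that a certain polynomial equation in $a$ and $d$ with integer coefficients has no solutions in the relevant range with $a<0$ and $d\geq 1$ — precisely the kind of statement deferred to the appendix \S\ref{sec:5}. The main obstacle, and the place where care is needed, is twofold: (i) getting the normalizations exactly right (the relation between $h$, the restricted hyperplane class $i^*H$, and the degree $d$, and the precise form of $\lambda(i,i)$ from the self-intersection formula — an off-by-a-power-of-$\alpha$ error would be fatal), and (ii) ensuring the list of candidate Chern data is genuinely complete, which rests on feeding in \emph{all} the rational-ellipticity restrictions (vanishing odd Betti numbers, $\chi(\mathcal O_X)=1$, the constrained Hodge diamond) rather than just the Betti numbers of $\mathbb{P}^4$. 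Once those are in place, the contradiction with $a<0$ should fall out of the integrality of $d$ and a short case check.
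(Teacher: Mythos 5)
Your overall architecture matches the paper's proof exactly: pin down the Chern data from Hirzebruch--Riemann--Roch-type formulas, then compute $\chi(N_X)$ both from $i^*c(\mathbb{P}^8)=c(X)\cdot c(N_X)$ and from the self-intersection formula, and derive a Diophantine contradiction. However, there are two genuine gaps. First, a numerical error: a $\mathbb{Q}$-homological projective four-space has the Betti numbers of $\mathbb{P}^4$, so $\chi(X)=5$, not $6$ (the value $6$ belongs to the Hodge-diamond \eqref{E hodge-diamond-1} case treated in \S\ref{sec:3}); this propagates into $\langle c_1c_3,[X]\rangle$ and everything downstream. Second, and more seriously, your claim that the HRR/Noether-type formulas alone ``cut the possible $(a,d)$ down to a finite list'' is not true. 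Writing $c_2=k\,c_1^2$, those formulas give $(3k^2+4k-1)\langle c_1^4,[X]\rangle=675$, which by itself admits infinitely many rational solutions, since $3k^2+4k-1$ can be arbitrarily small and positive. The paper's finiteness comes from the Miyaoka--Yau inequality $\tfrac{5}{2}\langle c_1^2c_2,[X]\rangle\ge\langle c_1^4,[X]\rangle$, giving $k\ge\tfrac25$, together with Yau's theorem that equality forces the universal cover to be a ball (excluded by simple connectivity, whence $k>\tfrac25$ strictly); this yields $\langle c_1^4,[X]\rangle<625$, and only then do the integrality constraints ($d=e^2$ from $b_4=1$, $l\mid er^2$) reduce to the single candidate $e=15$, $r=-1$, $k=\tfrac23$. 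Without Miyaoka--Yau your Step 1 does not close.

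A smaller but still substantive point: the final Diophantine equation is not merely ``in $a$ and $d$.'' Its unknown is the positive integer $m$ defined by $i^*t=mg$, where $t$ generates $H^2(\mathbb{P}^8)$ and $g$ generates $H^2(X)$; the degree of $X$ in $\mathbb{P}^8$ is then $m^4d$ and the self-intersection is $d^2m^8$, so one obtains $28350m^4+18900m^3+2700m^2-225m-30=50625m^8$, killed by reduction mod $3$. Your normalization conflates $d=\langle h^4,[X]\rangle$ with the projective degree (your relation $d=\alpha^4\langle h^4,[X]\rangle$ is circular as written), which obscures exactly this extra integer parameter $m$ that carries the contradiction. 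You flag the normalization as delicate, but the resolution --- introducing $m$ as a free positive integer and showing the resulting polynomial equation has no integer root --- is the substance of Step 2 and needs to be supplied.
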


\begin{proof} We follow the general idea in \S \ref{subsec general-idea} and prove this lemma by contradiction. The derivation of contradiction consists of two steps.

\vskip 0.08cm
\noindent\textbf{Step 1. The possible Chern classes for $c_{1}(X)<0$. }

Since the Hodge structure of $X$ is the same as $\mathbb{P}^4$, we may conclude that
\[
\langle c_{4},[X] \rangle=\chi(X)=5,\quad \chi(X,\mathscr{O}_{X})=h^{0}(X,\mathscr{O}_{X})=1.
\]
Then from the following formula (c.f.\cite{Hirz60})
\[
4\chi(X,\mathscr{O}_{X})-\chi^{1}(X)= \frac{1}{12}\langle(2c_{4}+c_{1}c_{3}),[X]\rangle
\]
we get $\langle c_{1}c_{3},[X] \rangle=50$, where
$
\chi^1(X)=\sum\limits_{q=0}^{n}(-1)^{q}h^{1,q}.
$

On the other hand, we have the following formula (c.f.\cite{Hirz})
\[
\chi(X,\mathscr{O}_{X})=\frac{1}{720}\langle (-c_{4}+c_{3}c_{1}+3c_{2}^{2}+4c_{2}c_{1}^{2} -c_{1}^{4}),[X]\rangle,
\]
which implies
$\langle (3c_{2}^{2}+4c_{2}c_{1}^{2}-c_{1}^{4}),[X] \rangle=725-\langle c_{3}c_{1},[X]\rangle =675.$

Since the fourth Betti number $b_{4}(X)=1$, we have that
\begin{align}\label{E c2=kc12}
c_{2}(X) = k\cdot c_{1}^{2}(X),\quad\text{ for a }\quad k\in \mathbb Q.
\end{align}
This shows that
\begin{align}\label{E 2.2}
(3k^{2}+4k-1)\langle c_{1}^{4},[X]\rangle=675.
\end{align}
On the other hand, we have the Miyaoka-Yau inequality
$ \displaystyle
\frac{5}{2}\langle c_{1}^{2}c_{2},[X] \rangle\geq \langle c_{1}^{4},[X]\rangle,
$
which together with \eqref{E c2=kc12} implies that $k\geq \frac{2}{5}$. Moreover, when $k=\frac{2}{5}$, the equality in the Miyaoka-Yau inequality holds, and then the universal covering space of $X$ is a ball (c.f.\cite{YST}), which contradicts the assumption that $X$ is simply connected. So we must have
\begin{align}\label{E 2.3}
k>\frac{2}5
\end{align}
and consequently
\begin{align}\label{E 2.4}
\frac{27}{25}\langle c_{1}^{4},[X] \rangle < 675.
\end{align}

Let $g$ be the positive generator of $H^{2}(X)$.
Define the {\bf degree}  of $X$ to be
\begin{align}\label{E degree-def}
d:=\langle g^{4},[X]\rangle .
\end{align}
Assume that $x$ be the generator of $ H^{4}(X)$ and $g^2=ex$ with $e\in \mathbb Z_{>0}.$
 Then we have that $d=e^{2}$. Now since the first Chern class $c_1(X)<0$, we can assume that $c_{1}(X)=rg$ for a $r\in \mathbb Z_{<0}$. Then since $e,r\in\mathbb Z$, by \eqref{E 2.4}, we have
\begin{align}\label{E 2.6}
0\leq e\leq 25,\quad -5\leq r< 0.
\end{align}

Suppose that  $k=\frac{n}{l}$ with $n, l \in \mathbb Z_{>0}$ and $\gcd(n,l)=1$. By \eqref{E c2=kc12}, we may conclude that
\begin{align}\label{E 2.7}
l \mid er^{2}.
\end{align}
Then \eqref{E 2.2}, \eqref{E 2.3}, \eqref{E 2.6}, \eqref{E 2.7} imply that
 \[
 e=15,r=-1,k=\frac{2}{3}.
 \]
 Thus by the following relation
\[
c_{1}(X)=rg,\quad c_{2}(X)=kr^{2}g^{2}, \quad c_{3}(X)=\frac{50}{rd}g^{3},\quad  c_{4}(X)=5
\]
we obtain  the Chern classes of $X$ are
\[
c_{1}(X)=-g,\quad c_{2}(X)=\frac{2}{3}g^{2}, \quad c_{3}(X)=-\frac{2}{9}g^{3},\quad  c_{4}(X)=5
\]

\vskip 0.08cm
\noindent\textbf{Step 2. Eliminating  the possible Chern classes.}

Let $t$ be the positive generator of $H^{2}(\mathbb{P}^8)$. Then for a holomorphic embedding
\[
i:X \to \mathbb{P}^8,
\]
 there is a $m\in \mathbb Z_{>0}$ such that $i^{\ast}t=mg$. This shows that
\begin{align*}
&i^{\ast}c_{1}(\mathbb{P}^{8})=i^{\ast}(9t)=9mg,\\
&i^{\ast}c_{2}(\mathbb{P}^{8})=i^{\ast}(36t^{2})=36m^{2}g^{2},\\
&i^{\ast}c_{3}(\mathbb{P}^{8})=i^{\ast}(84t^{3})=84m^{3}g^{3},\\
& i^{\ast}c_{4}(\mathbb{P}^{8})=i^{\ast}(126t^{4})=126m^{4}g^{4}=126m^4d.
\end{align*}
According to  $i^{\ast}c(\mathbb{P}^8)=c(X)\cdot c(N_{X})$,  the Euler characteristic of normal bundle $N_X$ of $i$ is
 \[\langle
 c_{4}(N_{X}),[X]\rangle=28350m^{4}+18900m^{3}+2700m^2-225m-30.
 \]
By the self-intersection formula, we have that the Euler characteristic of normal bundle
 \[
 \chi(N_{X})=\lambda(i,i)=\langle(i_{\ast}[X])^{2},\epsilon \rangle=d^{2}m^{8}=15^{4}m^{8}=50625m^{8}.
 \]
Therefore, the positive integer $m$ must satisfies the following equation
\[
28350m^{4}+18900m^{3}+2700m^2-225m-30=50625m^{8}.
\]
In appendix \S \ref{sec:5},  we will show in Lemma \ref{lem:1} that there is no positive integer solution to this equation. Therefore, the result holds.
\end{proof}

 Now we are in the core of the proof of Theorem \ref{thm:RHPS}.

\begin{lemma}\label{lem:RHPSS}
A  $\mathbb Q$-homological projective four-space $X$ with $c_{1}(X)>0$ is biholomorphic to $\mathbb{P}^{4}$.
\end{lemma}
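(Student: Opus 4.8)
The plan is to carry out the same two-step argument as in the proof of Lemma~\ref{lem:RHPSB}. Since $c_{1}(X)>0$ and $b_{2}(X)=1$, $X$ is a Fano fourfold, hence simply connected, with $\mathrm{Pic}(X)=H^{2}(X;\mathbb Z)\cong\mathbb Z$; let $g$ be the positive generator and write $c_{1}(X)=rg$, so $r\in\mathbb Z_{>0}$ is the Fano index. By the Kobayashi--Ochiai theorem $r\le 5$, with $r=5$ forcing $X$ biholomorphic to $\mathbb P^{4}$ and $r=4$ forcing $X$ to be the smooth quadric fourfold $Q^{4}\subset\mathbb P^{5}$. Since $b_{4}(Q^{4})=2\neq 1=b_{4}(X)$, the case $r=4$ cannot occur, and $r=5$ already yields the conclusion; so it suffices to rule out $r\in\{1,2,3\}$, and this is where the method of Lemma~\ref{lem:RHPSB} enters.

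First I would repeat Step~1 of that lemma word for word. The Hirzebruch--Riemann--Roch identities give $\langle c_{4},[X]\rangle=5$ and $\langle c_{1}c_{3},[X]\rangle=50$, and, setting $c_{2}(X)=k\,c_{1}^{2}(X)$ with $k\in\mathbb Q$ (legitimate because $b_{4}=1$) and $d=\langle g^{4},[X]\rangle$, the relation $(3k^{2}+4k-1)\,r^{4}d=675$, in which necessarily $3k^{2}+4k-1>0$ and in fact $k>0$ (since $c_{2}(X)\cdot(-K_{X})^{2}>0$ for a Fano manifold). Writing $g^{2}=ex$ for the generator $x$ of $H^{4}(X;\mathbb Z)$, so that $d=e^{2}$ and $\langle c_{1}^{4},[X]\rangle=r^{4}e^{2}$, I would then bring in an a priori upper bound on the anticanonical volume of a Fano fourfold, namely $(-K_{X})^{4}\le 5^{4}=625$ (the sharp bound, attained only by $\mathbb P^{4}$). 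This gives $r^{4}e^{2}\le 625$ and so confines $(r,e)$ to a short finite list. Combined with the rationality of $k$, which forces $7\,r^{4}e^{2}+2025$ to be a perfect square, with the divisibility $\ell\mid er^{2}$ for $k=n/\ell$ in lowest terms, and with $k>0$, a direct finite check over $r^{4}e^{2}\le 625$ then eliminates $r=2$ and $r=3$ outright, and leaves, besides $\mathbb P^{4}$ (the case $r=5$), only the two possibilities $(r,e,k)=(1,15,\tfrac{2}{3})$ and $(r,e,k)=(1,25,\tfrac{2}{5})$; in each of these the Chern classes of $X$ are completely determined via $c_{3}=\tfrac{50}{rd}g^{3}$ and $c_{4}=5$.

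Finally I would eliminate the two survivors. The case $(1,25,\tfrac{2}{5})$ has $\langle c_{1}^{4},[X]\rangle=625=5^{4}$, the extremal value, so the equality case of the degree bound (equivalently, the characterization of $\mathbb P^{n}$ as the unique Fano $n$-fold of maximal anticanonical degree) forces $X\cong\mathbb P^{4}$, contradicting $r=1$. For $(1,15,\tfrac{2}{3})$ I would reproduce Step~2 of Lemma~\ref{lem:RHPSB}: fixing a holomorphic embedding $i\colon X\to\mathbb P^{8}$ with $i^{\ast}t=mg$, the relation $i^{\ast}c(\mathbb P^{8})=c(X)\cdot c(N_{X})$ expresses $\langle c_{4}(N_{X}),[X]\rangle$ as a degree-$4$ polynomial in $m$ whose coefficients are read off from the Chern classes just found, while the self-intersection formula gives $\chi(N_{X})=\lambda(i,i)=d^{2}m^{8}$; equating the two produces a polynomial Diophantine equation of the form ``degree $4$ in $m$ $=$ degree $8$ in $m$'', which has no positive integer solution — this would be settled in the appendix exactly as in Lemma~\ref{lem:1}. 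Since $r\le 4$ is then entirely excluded, $r=5$ and $X$ is biholomorphic to $\mathbb P^{4}$.

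The step I expect to be the real obstacle is the finiteness in Step~1: the relation $(3k^{2}+4k-1)r^{4}d=675$ by itself admits, when $r=1$, infinitely many arithmetic candidates (the condition ``$7e^{2}+2025$ is a perfect square'' is a Pell-type equation), so the argument genuinely needs an external boundedness input for Fano fourfolds, and securing the sharp bound $(-K_{X})^{4}\le 625$ together with its equality case is what makes the proof go through cleanly. A secondary caveat is that the elimination of $(1,15,\tfrac{2}{3})$ uses the embedding $X\subset\mathbb P^{8}$; for the statement in full generality one should additionally check that such an $X$ must embed in $\mathbb P^{8}$, or replace the self-intersection formula by the double-point-class formula for a general projection $X\to\mathbb P^{8}$. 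Modulo these two points, the remainder is the Chern-number bookkeeping already carried out for Lemma~\ref{lem:RHPSB}.
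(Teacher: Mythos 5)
Your overall skeleton (bound the index, bound the degree, exploit $(3k^{2}+4k-1)r^{4}d=675$ with the divisibility $l\mid er^{2}$, then eliminate survivors) matches the paper's, but the two external inputs your argument leans on are not available, and this is a genuine gap rather than a presentational difference. First, the sharp bound $(-K_{X})^{4}\le 625$ for Fano fourfolds, with equality only for $\mathbb P^{4}$, is not a theorem: it is known for K-semistable Fanos (Fujita) and in various special classes, but not for arbitrary Fano fourfolds, and you yourself identify it as the load-bearing step. The paper instead uses only the established Koll\'ar--Miyaoka--Mori bound $c_{1}(X)^{4}\le\bigl(\tfrac{4+2}{2}\bigr)^{8}=3^{8}=6561$, which is why its case list contains entries with $e$ as large as $60$. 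Second, your claim $k>0$ via ``$c_{2}(X)\cdot(-K_{X})^{2}>0$ for a Fano manifold'' is likewise not a standard theorem in dimension $4$; the paper does not assume it and therefore must retain (and then kill) the cases with $k<0$. Without these two inputs your finite check does not close: you are left with the full ten-case list of the paper, and your proposed elimination of $(r,e,k)=(1,25,\tfrac{2}{5})$ via the equality case of the unproven $625$ bound collapses.

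The paper closes the argument by a different, citation-based route that you do not invoke: for index $r=1$ it quotes \cite{PS} (Theorem 9.1.6(v)) to force the degree $d=e^{2}\in\{2,4,5\}$, hence $e=2$, contradicting $e\in\{15,25,40,60\}$; for $r=2$ it quotes \cite{PS} (Theorem 5.2.3(i)) to get $d\le 22$, hence $e\le 4$, contradicting $e\in\{10,15\}$; and then $r=5$ gives $\mathbb P^{4}$ by Kobayashi--Ochiai \cite{KO}. Notably this means the paper never uses the embedding $X\subset\mathbb P^{8}$ or the self-intersection formula in this lemma (consistent with the lemma's statement, which carries no embedding hypothesis), whereas your treatment of $(1,15,\tfrac{2}{3})$ requires the embedding --- a caveat you correctly flag. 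Your observations that $r=4$ is excluded because $b_{4}(Q^{4})=2$ and that the $r=1$ relation is Pell-type (so some boundedness input is indispensable) are both sound; to repair the proof you should replace your two unproven inputs by the degree bounds for index-$1$ and index-$2$ Fano fourfolds that the paper cites.
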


\begin{proof}
We follow the same notations as in proof of Lemma \ref{lem:RHPSB}.
Since the first Chern class $c_{1}(X)>0$,
we may suppose that $c_{1}(X)=rg$ with $r\in \mathbb Z_{>0}$.
Note that for a $n$-dimensional Fano manifold $X$, the following  inequality holds (c.f.\cite{KMM}):
$\displaystyle c_{1}(X)^{n}\leq (\frac{n+2}{2})^{2n}.$
Then we may conclude that $\displaystyle r^{4}e^{2}\leq (\frac{4+2}{2})^{8}=3^{8}$.
This shows that
\begin{align}\label{E 2.8}
0\leq e\leq 81.
\end{align}
Recall that
\begin{align}\label{E 2.9}
l \mid er^{2}
\end{align}
for $k=\frac{n}{l}$, $n, l \in \mathbb Z$ and $\gcd(n,l)=1$.
It is well known  that the index $r$ of a Fano manifold satisfies (c.f.\cite{Mi80})
\begin{align}\label{E 2.10}
0< r\leq 5.
\end{align}
By \eqref{E 2.8},\eqref{E 2.9} and \eqref{E 2.10}, the relation $ (3k^{2}+4k-1)\langle c_{1}^{4},[X] \rangle=675$  implies that
\begin{enumerate}
\item $e=15,r=1,k=\frac{2}{3}$, or
\item $e=15,r=1,k=-2$, or
\item $e=25,r=1,k=\frac{2}{5}$, or
\item $e=40,r=1,k=-\frac{13}{8}$, or
\item $e=60,r=1,k=\frac{1}{4}$, or
\item $e=60,r=1,k=-\frac{19}{12}$, or
\item $e=10,r=2,k=-\frac{13}{8}$, or
\item $e=15,r=2,k=\frac{1}{4}$, or
\item $e=15, r=2,k=-\frac{19}{12}$, or
\item $e=1, r=5,k=\frac{2}{5}$.
\end{enumerate}

If the index $r=1,$ by \cite[Theorem 9.1.6 (v)]{PS},  the  degree $d=2,4,5.$ Then from $d=e^{2}$ we get $e=2$.  This contradicts the cases (1)-(6).
If the index $r=2,$  by \cite[Theorem 5.2.3 (i)]{PS}, the   degree $d\leq 22$. This shows that $e\leq 4$. This contradicts the cases (7)-(9).
Therefore, we may conclude that the index $r=5$. It is shown in \cite{KO} that for this case, $X$ is biholomorphic to $\mathbb{P}^{4}$.
\end{proof}

\begin{proof}[Proof of Theorem \ref{thm:RHPS}]
From the Lemma \ref{lem:RHPSB} and Lemma \ref{lem:RHPSS}, the Theorem \ref{thm:RHPS} holds.
\end{proof}

\begin{remark}
In fact, the cohomology ring $H^{\ast}(X)$ has torsion elements.
However, the above argument is to discuss the relationship between Chern numbers and Euler characteristic. Thus the torsion elements can be neglected.
\end{remark}

\section{Proof of  Theorem \ref{thm:hodge}} \label{sec:3}

This section is devoted to prove Theorem \ref{thm:hodge}.
By the discussion in \S \ref{subsec 1.2}, it suffices to consider the rationally elliptic fourfold with the even dimensional Hodge diamond being \eqref{E hodge-diamond-1}.
Recall that the first Chern class $c_{1}(X)\neq 0$ for a rationally elliptic fourfold $X$.
By the Hodge diamond \eqref{E hodge-diamond-1}, we may conclude that the first Chern class of $X$ is either $c_{1}(X)>0$ or $c_{1}(X)< 0$.
It is well known that the Hodge Conjecture \ref{Conj} holds for a Fano fourfold ($c_{1}(X)>0$) (c.f.\cite{CM}). So we only need to study the case $c_{1}(X)< 0$.
For a complex projective manifold $X$ of dimension $n$, if the Hodge Conjecture holds for Hodge classes of degree $p$, for all $p< n,$ then the Hodge Conjecture holds for Hodge classes of degree $n-p$ (c.f.\cite{LG}).  Thus  for a projective fourfold,  the Lefschetz (1,1)-classes Theorem (c.f.\cite{GH}) implies that the Hodge Conjecture on $Hdg^{2}(X)$ and $Hdg^{6}(X)$.
As a consequence, we only need to show that the Hodge classes in $Hdg^{4}(X)$ is algebraic.
To this end, we show that every Hodge classes in $Hdg^{4}(X)$ is a rational linear combination of Chern classes of some holomorphic vector bundles over $X$.

\begin{lemma}\label{lem:hodgeconj}
Let $X\subset\mathbb{P}^8$ be a rationally elliptic fourfold  with even dimensional Hodge diamond \eqref{E hodge-diamond-1}. If the first Chern class $c_{1}(X)<0$, then the Chern classes $c_{1}^{2}(X)$ and $c_{2}(X)$ are the generators of $Hdg^{4}(X)$.
\end{lemma}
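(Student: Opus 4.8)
The goal of Lemma~\ref{lem:hodgeconj} is to show that when $X\subset\mathbb{P}^8$ is rationally elliptic with Hodge diamond \eqref{E hodge-diamond-1} and $c_1(X)<0$, the classes $c_1^2(X)$ and $c_2(X)$ span $Hdg^4(X)=H^{2,2}(X)\cap H^4(X;\mathbb{Q})$. From the Hodge diamond we read off $h^{2,2}=2$ and $b_4=2$, so $Hdg^4(X)$ is a $2$-dimensional $\mathbb{Q}$-vector space. Since $c_1^2(X)$ and $c_2(X)$ are manifestly Hodge classes in $Hdg^4(X)$ (Chern classes of a holomorphic bundle are of type $(p,p)$ and rational), it suffices to show they are linearly independent over $\mathbb{Q}$, for then they form a basis of the $2$-dimensional space $Hdg^4(X)$. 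Hence the whole lemma reduces to ruling out a relation $c_2(X)=k\,c_1^2(X)$ with $k\in\mathbb{Q}$.

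The plan is therefore to argue by contradiction: suppose $c_2(X)=k\,c_1^2(X)$ for some $k\in\mathbb{Q}$, and derive the same kind of numerical impossibility as in the proof of Lemma~\ref{lem:RHPSB}. First I would record the invariants forced by the Hodge diamond \eqref{E hodge-diamond-1}: $\chi(X)=\langle c_4,[X]\rangle$ is the alternating sum of the Betti numbers ($=1-0+2-0+1+\cdots$, giving $\chi(X)=6$ after counting both middle halves — more precisely $\chi(X)=\sum(-1)^i b_i$), and $\chi(X,\mathscr{O}_X)=\sum_q(-1)^q h^{0,q}=1$ since $h^{0,q}=0$ for $q>0$ by the diamond. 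Plugging these into Hirzebruch's formulas $4\chi(X,\mathscr{O}_X)-\chi^1(X)=\tfrac{1}{12}\langle 2c_4+c_1c_3,[X]\rangle$ and $\chi(X,\mathscr{O}_X)=\tfrac{1}{720}\langle -c_4+c_3c_1+3c_2^2+4c_2c_1^2-c_1^4,[X]\rangle$, together with $\chi^1(X)$ read off the diamond, pins down $\langle c_1c_3,[X]\rangle$ and then $\langle 3c_2^2+4c_2c_1^2-c_1^4,[X]\rangle$. Using $b_4(X)=2$ one no longer gets $c_2=k c_1^2$ for free, which is exactly why we need the contradiction hypothesis; but under that hypothesis the Miyaoka–Yau inequality bounds $k$ from below (and the equality case is excluded by rational ellipticity, which is incompatible with a ball quotient), and $c_1<0$ with $b_2=1$ lets me write $c_1=rg$ with $r\in\mathbb{Z}_{<0}$, $g$ the positive generator, and bound $e$ (where $g^2=ex$) and $r$ via the resulting Chern-number equation just as in \eqref{E 2.6}–\eqref{E 2.7}.

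Next I would run Step 2 of the template: for the holomorphic embedding $i:X\to\mathbb{P}^8$ write $i^*t=mg$, compute $i^*c_j(\mathbb{P}^8)$ in terms of $m$ and $g$, use $i^*c(\mathbb{P}^8)=c(X)c(N_X)$ to extract $\langle c_4(N_X),[X]\rangle$ as an explicit polynomial in $m$ (with coefficients depending on the finitely many candidate Chern data), and set this equal to the self-intersection number $\chi(N_X)=\lambda(i,i)=d^2 m^8$ via the self-intersection formula. For each surviving candidate this yields a polynomial equation in the positive integer $m$, and I would invoke the appendix (the analogue of Lemma~\ref{lem:1}) to conclude each has no positive integer solution. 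This contradicts the existence of the embedding $i$, so no relation $c_2=k c_1^2$ can hold, proving $c_1^2(X)$ and $c_2(X)$ are independent and hence a basis of $Hdg^4(X)$.

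The main obstacle I expect is the case analysis in Step 2: because the Hodge diamond \eqref{E hodge-diamond-1} is less restrictive than the $\mathbb{Q}$-homological projective space condition, the constraints \eqref{E 2.2}–\eqref{E 2.7} may leave several candidate triples $(e,r,k)$ rather than a unique one, and each produces its own degree-$8$ Diophantine equation in $m$ whose unsolvability must be certified (presumably by mod-$p$ or size/growth arguments collected in the appendix). A secondary subtlety is getting the numerical inputs from \eqref{E hodge-diamond-1} exactly right — in particular $\chi(X)$ and $\chi^1(X)$ — since an off-by-one there propagates through every subsequent equation; I would double-check these against the full Hodge diamond including its lower half before committing to the Chern-number identities.
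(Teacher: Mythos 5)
Your proposal is correct and follows essentially the same route as the paper: reduce to ruling out a relation $c_2(X)=k\,c_1^2(X)$ in the two-dimensional space $Hdg^4(X)=H^4(X;\mathbb{Q})$, pin down $\langle c_1c_3,[X]\rangle$ and $(3k^2+4k-1)\langle c_1^4,[X]\rangle$ from $\chi(X)=6$ and $\chi(X,\mathscr{O}_X)=1$ via the Hirzebruch formulas, bound $k$ by Miyaoka--Yau, enumerate the finitely many candidates, and kill each one by equating $\langle c_4(N_X),[X]\rangle$ with $d^2m^8$ and appealing to the appendix. The one point your sketch gets wrong is the lattice step: since $b_4=2$ you cannot write $g^2=ex$; the paper instead uses the Hodge index theorem (signature $2$, hence a positive-definite unimodular form) and Kneser's diagonalization to write $g^2=ax+by$ in an orthonormal basis, so that $d=a^2+b^2$ and the divisibility constraints become $l\mid ar^2$ and $l\mid br^2$ --- this is what makes the case enumeration (ten candidates, four of which the paper first removes with the congruence $\langle c_1^2c_2+2c_1^4,[X]\rangle\equiv 0\pmod{12}$) go through.
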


\begin{proof}
The proof follows the same line as that of Lemma \ref{lem:RHPSB}.
We prove this lemma by contradiction.
So we first assume that $H^{4}(X;\mathbb Q)$ does not generated by the Chern classes $c_{1}^{2}(X)$ and $c_{2}(X)$.

\vskip 0.08cm

\noindent\textbf{Step 1. The possible Chern classes of $X$. }

According to the even dimensional Hodge diamond \eqref{E hodge-diamond-1}, we have
$Hdg^{4}(X)=H^{4}(X;\mathbb Q)$.
As we assume that the Chern classes $c_{1}^{2}(X)$ and $c_{2}(X)$ do not generate $H^{4}(X;\mathbb Q)$, and the second Betti number $b_{2}(X)=2$,  we have
\begin{equation}\label{E 3.1}
c_{2}(X) = k\cdot c_{1}^{2}(X),
\end{equation}
for some $k\in \mathbb Q$.
Note that the odd Betti numbers of rationally elliptic fourfold are all zero.
By the Hodge index Theorem,
the signature $\sigma(X)=2.$ This shows that the intersection form on $H^{4}(M)$ is positive-definite. Moreover, Kneser showed that such a matrix of the intersection form is congruent to identity matrix (c.f.\cite{KK}). Let $g$ be the positive generator of $H^{2}(X)$.  Since $g^{2}\in H^{4}(X)$, then  there exist  generators $x,y$ of $H^{4}(X)$ such that $g^{2}=ax+by$ with coefficients $a,b\in \mathbb Z_{>0}$
and satisfy $
\langle x^{2},[X] \rangle=\langle y^{2},[X]\rangle=1$ and  $\langle xy,[X]\rangle=0$.
Thus we have the degree of $X$,
$d=\langle g^{4},[X]\rangle=\langle (ax+by)^{2},[X]\rangle=a^{2}+b^{2}.$
The Hodge structure of $X$ implies that
\[
\langle c_{4},[X] \rangle=\chi(X)=6,\quad \chi(X,\mathscr{O}_{X})=h^{0}(X,\mathscr{O}_{X})=1.
\]
Using the same argument as the proof of Lemma \ref{lem:RHPSB}, we have
\[
\langle c_{1}c_{3},[X] \rangle=48, \quad  (3k^{2}+4k-1)\langle c_{1}^{4},[X] \rangle=678,
\]
and
\begin{align}\label{E 3.2}
k>\frac{2}{5}.
\end{align}
This implies that
$ \displaystyle
\frac{27}{25}\langle c_{1}^{4},[X] \rangle  < 678.
$
Since the first Chern class $c_1(X)<0$, we can suppose that $c_{1}(X)=rg$ for a $r\in \mathbb Z_{<0}$. By
\[
\langle c_{1}^{4},[X] \rangle=\langle r^4g^4,[X]\rangle=r^{4}d=r^{4}(a^{2}+b^{2}) <627,
\]
together with $r,d\in\mathbb Z$, we have
\begin{align}\label{E 3.3}
0\leq a,b\leq 25,\quad -5\leq r< 0.
\end{align}

Again, assume that  $k=\frac{n}{l}$ with $n, l \in \mathbb Z_{>0}$ and $\gcd(n,l)=1$. By \eqref{E 3.1}, we may conclude that
\begin{align}\label{E 3.4}
l \mid ar^{2},\quad \text{and}\quad l \mid br^{2}.
\end{align}
From  \eqref{E 3.2}, \eqref{E 3.3} and \eqref{E 3.4}, the relation $(3k^{2}+4k-1)\langle c_{1}^{4},[X] \rangle=678$  imply that
\begin{enumerate}
\item $a= 1,b= 0,r=-3,k=\frac{11}{9}$, or
\item $a= 1,b= 1,r=-1,k=10$, or
\item $a= 1,b= 1,r=-4,k=\frac{7}{16}$, or
\item $a= 4,b= 4,r=-2,k=\frac{7}{16}$, or
\item $a= 7,b= 8,r=-1,k=1$, or
\item $a= 8, b= 7,r=-1,k=1$, or
\item $a= 9,b= 0,r=-1,k=\frac{11}{9}$, or
\item $a= 3,b= 3,r=-2,k=\frac{7}{12}$, or
\item $a= 12,b= 12,r=-1,k=\frac{7}{12}$,  or
\item $a= 16, b= 16,r=-1,k=\frac{7}{16}$.
\end{enumerate}

Then by the following formula
\[
c_{1}(X)=rg,\quad c_{2}(X)=kr^{2}g^{2}, \quad c_{3}(X)=\frac{48}{rd}g^{3},\quad c_{4}(X)=6
\]
together with $d=a^2+b^2$, we  obtain  all the possible Chern numbers of $X$ for the cases (1)-(10).
\[
\begin{array}{c|c|c|c|c|c}

cases & \langle c_{1}^{4},[X]\rangle & \langle c_{1}c_{3},[X] \rangle & \langle c_{1}^{2}c_{2},[X] \rangle & \langle c_{2}^{2},[X] \rangle  & \langle c_{4},[X] \rangle  \\
\hline
(1)& 81 & 48& 99 & 121  & 6 \\
\hline
(2) & 2 & 48 & 20 & 200  &6 \\
\hline
(3) & 512 & 48  &  224  & 98  & 6 \\
\hline
(4) & 512 & 48  &  224  & 98  & 6 \\
\hline
(5) & 113 & 48  &  113 &             113 & 6 \\
\hline
(6) &  113 &  48  &  113  & 113 &  6 \\
\hline
(7)& 81 &  48  &  99  & 121 &  6\\
\hline
(8)& 288 &  48  &  168  & 98 &  6\\
\hline
(9)&  288  & 48  &  168  & 98 &  6\\
\hline
(10)& 512 &  48  &  224   & 98 &  6
\end{array}
\]

\noindent\textbf{Step 2. Eliminating  impossible Chern classes. }

First of all by the relation (c.f.\cite{Hirz60})
\[
\langle (c_{1}^{2}c_{2}+2c_{1}^{4}),[X]\rangle\equiv 0\  \pmod {12},
\]
the cases $(1), (5), (6), (7)$ are impossible. We next consider the rest cases.

With the same notations as in proof of Lemma \ref{lem:RHPSB}, we consider a holomorphic embedding $i:X \to \mathbb{P}^8$.
According to  $i^{\ast}c(\mathbb{P}^8)=c(X)\cdot c(N_{X}),$ we get the  Chern number of normal bundle
\begin{align*}
\langle c_{4}(N_{X}),[X]\rangle&=126dm^4-84rdm^3- (36kr^2d-36r^2d)m^2
\\
&-(\frac{432}{r}-18kr^3d+9r^3d)m+
(k^2r^4d-3kr^4d+r^4d+90).
\end{align*}
On the other hand,  we have the Euler characteristic of normal bundle
\[
\chi(N_{X})=\lambda(i,i)=\langle(i_{\ast}[X])^{2},\epsilon \rangle=\langle[(mg)^{4}]^{2},\epsilon \rangle=d^{2}m^{8}.
\]
Therefore, the positive integer $m$ satisfies the following equation
\begin{align*}
&126dm^4-84rdm^3-(36kr^2d-36r^2d)m^2-(\frac{432}{r}-18kr^3d+9r^3d)m\\
& +(k^2r^4d-3kr^4d+r^4d+90)=d^{2}m^{8}.
\end{align*}
with $d=a^2+b^2$. We will show in Lemma \ref{lem:2} that there is no positive integer solution to this equation for $a,b,r,k$ given in cases (2),(3),(4),(8),(9),(10). This shows that all cases of Chern numbers of $N_X$ are impossible, and hence a contradiction to the assumption that $H^{4}(X;\mathbb Q)$ does not generated by the Chern classes $c_{1}^{2}(X)$ and $c_{2}(X)$. The Lemma follows.
\end{proof}

\begin{proof}[Proof of Theorem \ref{thm:hodge}]
By the analysis in the beginning of this section, Theorem \ref{thm:hodge} follows from Lemma \ref{lem:hodgeconj}.
\end{proof}

\section{Proof of Theorem \ref{thm:level}}\label{sec:4}

 In this section we prove Theorem \ref{thm:level}.
Recall that for the projective fourfold of elliptic homotopy type,
the only possible even dimensional Hodge diamond with positive Hodge level is \eqref{E hodge-level}.
Different from the even dimensional Hodge diamond \eqref{E hodge-diamond-1},
it is difficult to determine the groups of Hodge classes in the case \eqref{E hodge-level}.
Therefore, to prove Theorem \ref{thm:level}, we need the following preparations.

\begin{lemma}\label{lem:level} For a smooth projective fourfold $X$ with elliptic homotopy type and even dimensional Hodge diamond \eqref{E hodge-level}, the dimensions of  groups of Hodge classes satisfy
\[
hdg^{2}(X)=hdg^{4}(X)=hdg^{6}(X)=hdg^{8}(X)=1.
\]
\end{lemma}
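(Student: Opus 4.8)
The statement asserts that for a smooth projective fourfold $X$ of elliptic homotopy type with even-dimensional Hodge diamond \eqref{E hodge-level}, all the groups of Hodge classes $hdg^{2k}(X)$ are $1$-dimensional. My plan is to proceed degree by degree, using the Lefschetz $(1,1)$-theorem at the ends and a Chern-class argument in the middle, exactly in the spirit of Lemma \ref{lem:hodgeconj}.

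\textbf{Setting up.} First I would record what the Hodge diamond \eqref{E hodge-level} gives us: $b_2(X)=b_6(X)=4$ and $b_4(X)=6$, while $h^{2,0}=h^{1,1}=1$ in weight $2$ (so $Hdg^2(X)$ sits inside a $4$-dimensional $H^2$), and in weight $4$ one has $h^{4,0}=h^{3,1}=h^{2,2}=1$, forcing $h^{2,2}=2$, so $Hdg^4(X)$ is a subgroup of this $2$-dimensional $H^{2,2}$. Since $H^2(X;\mathbb Q)\cap H^{1,1}(X)$ is spanned by the Kähler class alone (as $h^{1,1}=1$ here — wait, the diamond shows $h^{1,1}=2$), I need to be careful: from \eqref{E hodge-level}, weight $2$ has entries $1,2,1$, so $h^{2,0}=h^{0,2}=1$ and $h^{1,1}=2$; hence $Hdg^2(X)=H^{1,1}(X;\mathbb Q)$ has dimension at most $2$, and I must still cut it down to $1$. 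The key external input is $c_1(X)\ne 0$ for a rationally elliptic fourfold (Corollary 1.4 of \cite{SY}), and $b_2(X)$-style constraints. Since $b_2=4$ this does not immediately give $hdg^2=1$; the reduction to dimension $1$ should come from the same kind of numerology on Chern numbers as before, together with the Hodge-index/Kneser lattice argument already used in Lemma \ref{lem:hodgeconj}, refined to the present diamond.

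\textbf{Main steps.} (i) By the Lefschetz $(1,1)$-theorem every class in $Hdg^2(X)$ is algebraic, and by hard Lefschetz $hdg^2(X)=hdg^6(X)$; dually, if we can show $hdg^2(X)=1$ we are done at the outer levels, and $hdg^8(X)=1$ is automatic since $H^8(X;\mathbb Q)=\mathbb Q[X]^\vee$ is spanned by the fundamental class. (ii) For $hdg^4(X)$: as in Lemma \ref{lem:hodgeconj}, write $c_2(X)=k\,c_1^2(X)$ only if $c_1^2$ and $c_2$ fail to span $H^4(X;\mathbb Q)$; derive the Chern-number relations from the Hirzebruch–Riemann–Roch formulas of \cite{Hirz60}, \cite{Hirz}, \cite{Hirz} using $\chi(X)=b_0-b_1+\cdots=$ the alternating sum read off \eqref{E hodge-level} and $\chi(X,\mathscr O_X)=1-0+1=2$ (note: here $h^{0,2}=1$ so $\chi(\mathscr O_X)=1-0+1-0+0$... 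I need to recompute $\chi(\mathscr O_X)=\sum(-1)^q h^{0,q}=h^{0,0}-h^{0,1}+h^{0,2}-h^{0,3}+h^{0,4}=1-0+1-0+1=3$? no — $h^{0,4}=h^{4,0}=1$, so $\chi(\mathscr O_X)=1-0+1-0+1$... wait $h^{0,3}=h^{3,0}$, and weight-$3$ row of \eqref{E hodge-level} is $1,2,2,1$ so $h^{3,0}=1$; thus $\chi(\mathscr O_X)=1-1+1-1+1=1$). With these numerical inputs, run the degree/index bounds (Miyaoka–Yau, Kobayashi–Ochiai, the Fano index and degree bounds from \cite{PS}) to produce a finite list of candidate Chern data, then kill each candidate by the normal-bundle Euler-characteristic comparison: $\langle c_4(N_X),[X]\rangle$ computed from $i^*c(\mathbb P^8)=c(X)c(N_X)$ must equal the self-intersection $\lambda(i,i)=d^2m^8$, and one checks (via an appendix lemma analogous to Lemma \ref{lem:2}) this Diophantine equation has no solution. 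This contradiction forces $c_1^2$ and $c_2$ to span $H^4(X;\mathbb Q)$ — but that space is $2$-dimensional only if $Hdg^4$ were $2$-dimensional; since $H^{2,2}$ is $2$-dimensional, $hdg^4(X)\le 2$, and I must additionally argue $hdg^4(X)=1$. Here I would use that $c_1^2(X)$ and $c_2(X)$ both lie in $Hdg^4(X)$ and, given $c_2=k c_1^2$ is forced by the numerics in the surviving case (if any), they are proportional, hence span only a $1$-dimensional subspace; combined with hard Lefschetz $hdg^4(X)\ge hdg^2(X)\ge 1$ and the bound $hdg^4(X)\le h^{2,2}=2$, an intermediate step pinning $hdg^2(X)=1$ then propagates to pin $hdg^4(X)$. (iii) For $hdg^2(X)=1$: use that $Hdg^2(X)=H^{1,1}(X;\mathbb Q)$ and the Hodge-index theorem makes the cup-product form on $H^2(X;\mathbb Q)$ of signature type $(1, h^{1,1}-1)$ on $(1,1)$-part; the lattice-theoretic argument (Kneser, as in Lemma \ref{lem:hodgeconj}) plus the Chern-number constraints and the same normal-bundle elimination should rule out $hdg^2\ge 2$.

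\textbf{Expected main obstacle.} The hard part will be the middle-degree elimination: organizing the Diophantine book-keeping so that the finitely many Chern-data candidates produced by Miyaoka–Yau / Fano-index / degree bounds are each defeated by the "$\langle c_4(N_X),[X]\rangle = d^2 m^8$" equation, and doing so uniformly enough to quote a single appendix lemma. A secondary subtlety is that, unlike \eqref{E hodge-diamond-1}, the diamond \eqref{E hodge-level} does \emph{not} have $Hdg^4=H^4(X;\mathbb Q)$, so I cannot simply assert "$c_1^2, c_2$ generate $H^4$"; I must phrase the whole argument inside the (at most $2$-dimensional) spaces $Hdg^{2k}$ and show each one actually collapses to dimension $1$, which requires the extra step of proving the two Chern classes $c_1^2(X)$, $c_2(X)$ are forced to be proportional in every case that survives the numerology — equivalently, that no case survives with $c_1^2, c_2$ independent. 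I expect that, as in the previous two lemmas, \emph{every} candidate case is in fact eliminated, so that the diamond \eqref{E hodge-level} cannot occur at all for $X\subset\mathbb P^8$, which is the cleanest route: then the statement holds vacuously, and Theorem \ref{thm:level} follows.
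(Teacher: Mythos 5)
Your proposal misses the actual content of this lemma and, as structured, is circular. In the paper this lemma is the \emph{input} to the normal-bundle/Diophantine elimination (Lemma \ref{lem:levelzero}), not a consequence of it: for the diamond \eqref{E hodge-level} one has $b_4(X)=6$, so the relation $c_2(X)=k\,c_1^2(X)$ cannot be extracted from any ``failure of $c_1^2,c_2$ to span $H^4(X;\mathbb Q)$'' --- it is obtained precisely because $hdg^4(X)=1$ forces the two Hodge classes $c_1^2(X)$ and $c_2(X)$ to be proportional. Your step (ii) proposes to run the Chern-data elimination first and deduce proportionality from it, which presupposes what you are trying to prove; and even if every candidate were eliminated, the lemma as stated makes no reference to an embedding $X\subset\mathbb P^8$, so it cannot ``hold vacuously'' on those grounds. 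Moreover, proportionality of $c_1^2(X)$ and $c_2(X)$ would not by itself bound $hdg^4(X)$, since $Hdg^4(X)$ need not be spanned by Chern classes.

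The paper's proof is a short, self-contained Hodge-theoretic argument that your proposal does not contain: choose a rational K\"ahler class $\omega$ and a primitive class $x$ so that $\{\omega,x\}$ is a basis of $H^{1,1}(X)$ (here $h^{1,1}=2$); then $Hdg^4(X)\subset\mathrm{span}\{\omega^2,\omega x\}$ and $\omega^3x=0$. If $hdg^4(X)=2$, the Hodge conjecture --- already known for the diamond \eqref{E hodge-level} by \cite{SY} --- makes a nonzero multiple $b\,\omega x$ an algebraic class; since $\langle\omega^2\cdot(b\omega x),[X]\rangle=0$, the Hodge index theorem forces $\langle(b\omega x)^2,[X]\rangle>0$, contradicting the Hodge--Riemann bilinear relation $\langle\omega^2x^2,[X]\rangle<0$. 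Hence $hdg^4(X)=1$, and Hard Lefschetz then gives $hdg^2(X)=1$. Separately, your numerical inputs are off: reading \eqref{E hodge-level} as the \emph{even-dimensional} diamond (rows for $H^0,H^2,H^4,H^6,H^8$) with all odd cohomology vanishing gives $\chi(X,\mathscr{O}_{X})=h^{0,0}+h^{0,2}=2$, not $1$ (or $3$), so the Chern-number relations you would feed into the elimination would be wrong as well.
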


\begin{proof}
 Since $X$ is projective, we may pick up a rational K\"{a}hler two-form $\omega$ on $X$.
Without loss of generality, we suppose $\omega \in H^{1,1}(X)\cap H^{2}(X).$
Let $x$ be a primitive cohomology class of $H^{2}(X;\mathbb R)$ such that  $\{\omega, x\}$ is a basis of $H^{1,1}(X)$.  By Hard Lefschetz Theorem, the $\omega^{2},\omega x$ are  generators of $H^{2,2}(X)$ and $\omega^{3}x=0.$ Since $Hdg^{4}(X)\subset H^{2,2}(X)$, then every Hodge class in $ Hdg^{4}(X)$ is linear combination with real
coefficients of $\omega^{2},\omega x$.

 Assume that $hdg^4(X)=2$.  Recall that the Hodge conjecture holds for this case (see \S \ref{subsec 1.2}). This gives that the Chow ring $CH^{2k}(X)=Hdg^{2k}(X)$.    Let $\{\omega^{2}, a\omega^{2}+b \omega x\} $ be a basis of $ Hdg^{4}(X)$ with $a,b\in \mathbb R$.   Then we have that
\[
a\omega^{4}=\omega^{2}(a\omega^{2}+b \omega x )\in Hgd^{8}(X)
\]
and so $a\in \mathbb Q$. This implies that  $a \omega^{2} \in  Hdg^{4}(X)$ and
\[
b \omega x=(a\omega^{2}+b \omega x)-a \omega^{2} \in Hdg^{4}(X).
\]
Thus $b \omega x$ corresponds  a rational linear combination  of cohomology classes of  algebraic cycles.

Note that $\omega$ corresponds an ample divisor on $X$ by $CH^{2}(X)=CH_{6}(X).$
Since
\[
\langle\omega^{2}\cdot (b\omega x),[X]\rangle=0,
\]
 by the Hodge index theorem (c.f.\cite[Appendix A, Theorem 5.2]{HA}), we have
$
 \langle (b\omega x)^{2},[X]\rangle>0.
$
This contradicts the Hodge--Riemann bilinear relation.
Indeed, from the Hodge--Riemann bilinear relation, we have  $\langle\omega^{2}x^{2},[X]\rangle<0$.
So we must have $hdg^4(X)=1$.
According to Hard Lefschetz Theorem, that $hdg^4(X)=1$ implies $hdg^{2}(X)=1$.  The Lemma follows.
\end{proof}

Now we prove the main result of this section.

\begin{lemma}\label{lem:levelzero}
 There is no rationally elliptic projective fourfold $X\subset\mathbb{P}^8$ with the even dimensional Hodge diamond \eqref{E hodge-level}.
\end{lemma}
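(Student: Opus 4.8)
The plan is to follow the two-step strategy laid out in \S\ref{subsec general-idea}, exactly as in Lemmas \ref{lem:RHPSB} and \ref{lem:hodgeconj}, applied now to the Hodge diamond \eqref{E hodge-level}. By Lemma \ref{lem:level} we know that $b_2(X)=2$ and, crucially, that all groups of Hodge classes are one-dimensional; in particular $H^4(X;\mathbb Q)$ is \emph{not} generated by $c_1^2(X)$ and $c_2(X)$ alone (since these are Hodge classes, they span at most a one-dimensional subspace), so once again we get a relation $c_2(X)=k\,c_1^2(X)$ for some $k\in\mathbb Q$. Since $c_1(X)\neq 0$ and $b_2(X)=2$ with the intersection form on $H^4(X)$ positive-definite (the signature is $2$ by the Hodge index theorem applied to the even Betti numbers of \eqref{E hodge-level}), Kneser's theorem again lets me choose generators $x,y$ of $H^4(X)$ with $\langle x^2\rangle=\langle y^2\rangle=1$, $\langle xy\rangle=0$, write $g^2=ax+by$, and set $d=a^2+b^2$.

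First I would read off the Euler characteristic and holomorphic Euler characteristic from \eqref{E hodge-level}: here $\chi(X)=\langle c_4,[X]\rangle = 4$ and $\chi(X,\mathscr O_X)=1-0+2-0+1=4$ wait — one must be careful: $\chi(X,\mathscr O_X)=\sum_p(-1)^p h^{p,0}=h^{0,0}-h^{1,0}+h^{2,0}-h^{3,0}+h^{4,0}=1-0+1-0+1=3$ from the left edge of \eqref{E hodge-level}, and similarly $\chi^1(X)=\sum_q(-1)^q h^{1,q}=1-2+1-0+0=0$. Feeding these into Hirzebruch's Riemann--Roch formulae
\[
4\chi(X,\mathscr O_X)-\chi^1(X)=\tfrac1{12}\langle 2c_4+c_1c_3,[X]\rangle,\qquad
\chi(X,\mathscr O_X)=\tfrac1{720}\langle -c_4+c_3c_1+3c_2^2+4c_2c_1^2-c_1^4,[X]\rangle
\]
pins down $\langle c_1c_3,[X]\rangle$ and then a numerical identity of the form $(3k^2+4k-1)\langle c_1^4,[X]\rangle = C$ for an explicit constant $C$. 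The Miyaoka--Yau inequality $\tfrac52\langle c_1^2c_2,[X]\rangle\ge\langle c_1^4,[X]\rangle$ together with $c_2=kc_1^2$ forces $k>2/5$ (equality would give a ball quotient, contradicting simple connectivity), hence $\tfrac{27}{25}\langle c_1^4,[X]\rangle<C$, which bounds $a,b$ and $r$ (writing $c_1(X)=rg$). Combining this with the divisibility constraint $l\mid ar^2$, $l\mid br^2$ (where $k=n/l$ in lowest terms), exactly as in Step 1 of Lemmas \ref{lem:RHPSB} and \ref{lem:hodgeconj}, produces a finite explicit list of candidate tuples $(a,b,r,k)$, and hence a finite list of candidate Chern numbers $\langle c_1^4\rangle, \langle c_1c_3\rangle, \langle c_1^2c_2\rangle, \langle c_2^2\rangle, \langle c_4\rangle$. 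A first pruning uses the integral congruence $\langle c_1^2c_2+2c_1^4,[X]\rangle\equiv 0\pmod{12}$ of \cite{Hirz60}; depending on the sign of $r$ one may also invoke the Fano bounds on index and degree of \cite{PS,KMM,Mi80} to kill the $c_1>0$ candidates, or note that $c_1<0$ is the relevant regime.

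For the surviving tuples I would carry out Step 2: for a holomorphic embedding $i:X\to\mathbb P^8$ with $i^*t=mg$, expand $i^*c(\mathbb P^8)=c(X)\,c(N_X)$ to get $\langle c_4(N_X),[X]\rangle$ as an explicit polynomial in $m$ (with coefficients built from $d,r,k$, entirely parallel to the displayed formula in the proof of Lemma \ref{lem:hodgeconj}), while the self-intersection formula gives $\chi(N_X)=\lambda(i,i)=d^2m^8$. Equating the two yields, for each candidate, a polynomial Diophantine equation in the single unknown $m\in\mathbb Z_{>0}$; the claim is that none of them has a positive integer solution. I would relegate this verification to a lemma in the appendix \S\ref{sec:5}, in the same style as Lemmas \ref{lem:1} and \ref{lem:2}: the degree-$8$ leading term $d^2m^8$ dominates the degree-$\le 4$ left-hand side for all $m$ beyond a tiny explicit bound, so only $m=1,2,\dots$ up to that bound need be checked by hand. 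The main obstacle I anticipate is not conceptual but bookkeeping: making sure the candidate list in Step 1 is genuinely complete (every sign case of $r$, every divisor $l$), and that the polynomial-in-$m$ coefficients are computed without arithmetic slips — a single error there could spuriously admit or exclude a case. A secondary subtlety is confirming the signature really is $2$ and the intersection form diagonalizes over $\mathbb Z$ for \eqref{E hodge-level}; this follows as before from vanishing odd Betti numbers plus the Hodge index theorem plus Kneser \cite{KK}, but it is worth stating explicitly since the Hodge diamond \eqref{E hodge-level} has a richer $H^{1,1}$ than \eqref{E hodge-diamond-1}.
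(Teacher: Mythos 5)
Your overall two-step strategy is the right one and matches the paper's, but the execution contains concrete errors that would derail the computation. First, you misread the Hodge diamond \eqref{E hodge-level}: its rows are $H^0,H^2,H^4,H^6,H^8$, so $b_2(X)=4$ (not $2$), $\chi(X)=1+4+6+4+1=16$ (not $4$), $h^{4,0}=0$ so $\chi(X,\mathscr O_X)=1-0+1-0+0=2$ (not $3$), and $\chi^1(X)=0-h^{1,1}+0-h^{1,3}+0=-4$ (not $0$, since the odd Betti numbers vanish and the row ``$1\ 2\ 1$'' you used records $h^{2,0},h^{1,1},h^{0,2}$, not $h^{1,q}$). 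These feed directly into Hirzebruch's formulae: the correct values give $\langle c_1c_3,[X]\rangle=112$ and $(3k^2+4k-1)\langle c_1^4,[X]\rangle=1344$, whereas your values would produce a different constant and hence a different (and wrong) candidate list for $(d,r,k)$, so the final Diophantine check would be testing the wrong equations.

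Second, and more structurally, your use of the signature-$2$/Kneser diagonalization with $g^2=ax+by$ and $d=a^2+b^2$ does not transfer to this case. For the diamond \eqref{E hodge-level} one computes $\sigma(X)=\sum(-1)^qh^{p,q}=0$ on a rank-$6$ lattice $H^4(X)$, so the intersection form is indefinite and there is no reason for $d$ to be a sum of two squares. The paper avoids this entirely: Lemma \ref{lem:level} gives $hdg^4(X)=1$, which already forces $c_2(X)=k\,c_1^2(X)$ and lets one work with the single integer $d=\langle g^4,[X]\rangle$ where $g$ generates $Hdg^2(X)\cap H^2(X)$; the divisibility constraint is then $l^2\mid dr^4$ rather than $l\mid ar^2$, $l\mid br^2$. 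Two further points you should make explicit rather than hedge on: $c_1(X)<0$ holds outright here (the paper cites \cite{SY}; note $h^{2,0}=1$ already rules out Fano), and in Step 2 one of the five surviving candidates (the case $d=14$, $r=-2$, $k=1$) is not killed by the Diophantine equation but by the integrality of the $\hat A$-genus of a Spin manifold, computed to be $\tfrac14$ --- an ingredient absent from your plan.
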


\begin{proof}
 Let $X \subset\mathbb{P}^8$ be a  projective fourfold  with elliptic homotopy type and even  dimensional Hodge diamond \eqref{E hodge-level}. By Lemma \ref{lem:level}, we see that $X$ is monotone. It is shown in \cite[Corollary 1.4]{SY} that the first Chern class $c_{1}(X)<0.$  The same reason as in Lemma \ref{lem:RHPSB},  we prove the result in two steps.

\vskip 0.08cm
\noindent\textbf{Step 1. The possible Chern classes of $X$. }

Since $hdg^{4}(X)=1$, we have the relation
\begin{align}\label{E 4.1}
c_{2}(X) = k\cdot c_{1}^{2}(X),\quad\text{ for a }\quad k\in \mathbb Q.
\end{align}
Let $g$ be the positive generator of $Hdg^{2}(X)\cap H^{2}(X)$. The the first Chern class $c_{1}(X)=rg$ for a $r\in \mathbb Z_{<0}$.
Since the even dimensional Hodge diamond of $X$ is \eqref{E hodge-level}, together with all odd Betti numbers are zero, we have
$
\langle c_{4},[X] \rangle=\chi(X)=16$ and $\chi(X,\mathscr{O}_{X})= h^{0}(X,\mathscr{O}_{X})+h^{2}(X,\mathscr{O}_{X})=2$.

Using the same argument as the proof of Lemma \ref{lem:RHPSB}, we have
\[
\langle c_{1}c_{3},[X] \rangle=112, \quad  (3k^{2}+4k-1)\langle c_{1}^{4},[X] \rangle=1344,
\]
and
\begin{equation}\label{E 4.2}
k>\frac{2}{5}.
\end{equation}
 This implies that
$ \displaystyle
\frac{27}{25}\langle c_{1}^{4},[X] \rangle=\frac{27}{25}r^{4}d < 1344,
$
where $d$ is the degree of $X$ defined in \eqref{E degree-def}. Since $d,r\in\mathbb Z$, we have
\begin{align}\label{E 4.3}
0\leq d\leq 1244,\quad -5\leq r< 0.
\end{align}

Again, suppose  $k=\frac{n}{l}$ with $n, l \in \mathbb Z_{>0}$ and $\gcd(n,l)=1$, then by \eqref{E 4.1}, we get
\begin{align}\label{E 4.4}
l^{2} \mid dr^{4}.
\end{align}
By \eqref{E 4.2}, \eqref{E 4.3} and \eqref{E 4.4}, the relation $ (3k^{2}+4k-1)\langle c_{1}^{4},[X] \rangle=1344$  shows that
\begin{enumerate}
\item $d=3,r=-4,k=\frac{1}{2}$, or
\item $d=14,r=-2,k=1$, or
\item $d=48, r=-2, k=\frac{1}{2}$, or
\item $d=224, r=-1,k=1$, or
\item $d=768,r=-1,k=\frac{1}{2}$.
\end{enumerate}
Then by the following formulas
\[
c_{1}(X)=rg, \quad c_{2}(X)=kr^{2}g^{2}, \quad c_{3}(X)=\frac{112}{rd}g^{3},\quad c_{4}(X)=16
\]
we  obtain  all the possible Chern classes of $X$.

\vskip 0.08cm
\noindent\textbf{Step.2. Eliminating  the possible Chern classes.}

First of all,  the $\hat {A}$ genus of the case (2) is
\begin{align*}
 \hat {A}(X)=\frac{1}{5760}\langle(-4p_{2}+7p_{1}^{2}),[X]\rangle&=\frac{1}{5760}\langle(-4(k^{2}r^{4}d-192)+7(1-2k)^{2}r^{4}d),[X]\rangle\\
&=\frac{1}{5760}(-4(16\times 14-192)+7\times16 \times 14)=\frac{1}{4}
\end{align*}
This contradicts that  $\hat {A}$ genus of Spin manifold is integral.   We next consider the rest cases.

With the same notations as in proof of Lemma \ref{lem:RHPSB}, we consider a holomorphic embedding $i:X \to \mathbb{P}^8$.
According to  $i^{\ast}c(\mathbb{P}^8)=c(X)\cdot c(N_{X}),$ we get the  Chern number of normal bundle
\begin{align*}
\langle c_{4}(N_{X}),[X]\rangle&= 126dm^4-84rdm^3-(36kr^2d-36r^2d)m^2\\
&-(\frac{1008}{r}-18kr^3d+9r^3d)m+ (k^2r^{4}d-3kr^4d+r^4d+208).
\end{align*}
Then as the proof of Lemma \ref{lem:RHPSB}, the positive integer $m$ must satisfies the following equation
\begin{align*}
&126dm^4-84rdm^3-(36kr^2d-36r^2d)m^2-(\frac{1008}{r}-18kr^3d+9r^3d)m\\
&+(k^2r^{4}d-3kr^4d+r^4d+208)=d^{2}m^{8}.
\end{align*}
where $d,r,k$ are given in cases (1), (3), (4), (5). We will also show in Lemma \ref{lem:3} that the above equation on $m$ has no positive integer solutions for all the four cases. This finishes the proof.
\end{proof}

\begin{proof}[Proof of Theorem \ref{thm:level}]
By the analysis in the beginning of this section, Theorem \ref{thm:level} follows from Lemma \ref{lem:levelzero}.
\end{proof}


\appendix
\section{}\label{sec:5}

This appendix provides the technical details for the proof of our main theorems.  More pricisely, we show that there are no positive integer solution of the equations in the proof of   Lemma \ref{lem:RHPSB}, Lemma \ref{lem:hodgeconj}, Lemma \ref{lem:levelzero} respectively.

\begin{lemma}\label{lem:1}
There is no  positive integral solution of the following equation
$$
28350m^{4}+18900m^{3}+2700m^2-225m-30=50625m^{8}.
$$
\end{lemma}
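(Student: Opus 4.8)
The plan is to dispose of this purely Diophantine statement by a congruence obstruction, keeping a size estimate in reserve. \textbf{Primary approach: reduction modulo $9$.} First I would record the $9$-divisibility of the integer coefficients in the equation: $28350 = 9\cdot 3150$, $18900 = 9\cdot 2100$, $2700 = 9\cdot 300$, $225 = 9\cdot 25$ and $50625 = 9\cdot 5625$, whereas $30 \equiv 3 \pmod 9$. Hence, reducing the equation modulo $9$, every term involving $m$ on either side vanishes, and what remains is $-30 \equiv 0 \pmod 9$, that is $6 \equiv 0 \pmod 9$, which is absurd. Therefore the equation has no solution in $\z$ at all, in particular none in $\z_{>0}$. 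This is the write-up I would use, as it is two lines once the divisibilities are noted — they are immediate from digit sums ($2+8+3+5+0 = 18$, $1+8+9+0+0 = 18$, $2+7+0+0 = 9$, $2+2+5 = 9$, $5+0+6+2+5 = 18$).

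\textbf{Backup approach: a growth estimate.} If one prefers an argument not relying on these divisibilities, one can compare the two sides by size. At $m = 1$ the left side equals $28350 + 18900 + 2700 - 225 - 30 = 49695$ while the right side equals $50625$, so they differ. For $m \geq 2$ one bounds the part of degree at most $4$ crudely by $28350 m^4 + 18900 m^3 + 2700 m^2 - 225 m - 30 < (28350 + 18900 + 2700)m^4 = 49950\,m^4$ and the leading term by $50625 m^8 = 50625\,m^4 \cdot m^4 \geq 50625\cdot 16\,m^4 = 810000\,m^4 > 49950\,m^4$, so the right side strictly dominates and equality is impossible. This exhausts all positive integers $m$.

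\textbf{On the difficulty.} There is essentially no obstacle here: the claim is elementary and both routes are short. The only point needing minimal care is the coarsening of the lower-order terms in the backup argument, which is harmless precisely because the degree gap between the two sides is so large; the primary argument needs only the elementary divisibility checks recorded above.
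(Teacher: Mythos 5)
Your primary argument is correct and is essentially the paper's own proof: the paper divides the equation by $15$ and observes that $3$ divides every remaining term except the constant $-2$, which is the same congruence obstruction at the prime $3$ that you obtain by reducing mod $9$ directly. Your backup size estimate is also valid but unnecessary.
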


\begin{proof}
The equation
\[
50625m^{8}-28350m^{4}-18900m^{3}-2700m^2+225m+30=0
 \]
 can be changed into
$
 3375m^{8}-1890m^{4}-1260m^{3}-180m^2+15m=-2.
$
Since $m$ is integral, by $3 \mid (3375m^{8}-1890m^{4}-1260m^{3}-180m^2+15m)$ and $3 \nmid -2$, we can get a contradiction.
\end{proof}

\begin{lemma}\label{lem:2}
For the cases $(2)$, $(3)$, $(4)$, $(8)$, $(9)$, $(10)$ in the proof of Lemma \ref{lem:hodgeconj},
there is no positive integral  solution of the following equation with unknown number $m$.
\begin{align*}
&126dm^4-84rdm^3-(36kr^2d-36r^2d)m^2-(\frac{432}{r}-18kr^3d+9r^3d)m\\
&+(k^2r^4d-3kr^4d+r^4d+90)=d^{2}m^{8}
\end{align*}
\end{lemma}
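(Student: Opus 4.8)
The statement is a concrete arithmetic claim: for each of the six explicit triples $(a,b,r,k)$ coming from cases $(2),(3),(4),(8),(9),(10)$ of Lemma \ref{lem:hodgeconj} — equivalently the triples $(d,r,k)$ with $d=a^2+b^2$, namely $(2,-1,10)$, $(2,-4,\tfrac{7}{16})$, $(32,-2,\tfrac{7}{16})$, $(18,-2,\tfrac{7}{12})$, $(288,-1,\tfrac{7}{12})$, $(512,-1,\tfrac{7}{16})$ — the degree-$8$ polynomial equation in $m$ obtained by substituting these values has no solution in $\mathbb Z_{>0}$. The plan is simply to dispose of each case in turn; since these are one-variable Diophantine equations of fixed degree, the real content is finding, for each case, a clean obstruction rather than any deep idea.

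First I would substitute the numerical data into the displayed equation to get, in each case, a single polynomial identity $d^2 m^8 - 126 d m^4 + 84 r d m^3 + (36kr^2d - 36r^2d)m^2 + (\tfrac{432}{r} - 18kr^3d + 9r^3d)m - (k^2r^4d - 3kr^4d + r^4d + 90)=0$ with explicit integer (after clearing the denominator of $k$) coefficients. The cleanest route to a contradiction is a congruence argument: reduce the cleared equation modulo a small prime $p$ (typically $2$, $3$, or $5$) chosen so that the constant term is a nonzero residue while every term involving $m$ vanishes mod $p$ — this is exactly the device already used in Lemma \ref{lem:1}, where reduction mod $3$ left $15m + (\text{multiples of }3) \equiv -2 \pmod 3$, impossible. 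Here the leading coefficient $d^2 m^8$ is the only term that is not automatically killed by a prime dividing all the lower coefficients, so in the cases where $p \mid d$ one gets an immediate contradiction from the constant term; in the remaining cases one instead reduces mod a prime dividing the relevant combination of $d$ and $r$, or argues by a simple size/growth estimate since $d^2 m^8$ dominates the right-hand polynomial for all $m \ge 1$ once the coefficients are pinned down (the degree-$8$ term grows far faster than the degree-$\le 4$ remainder, so only very small $m$ could possibly work and these are ruled out by direct substitution).

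Concretely, for the cases with $r=-1$ and small $d$ (case $(2)$, $d=2$), after clearing denominators the lower-order coefficients are all even while the constant term contributes an odd residue, so reduction mod $2$ finishes it; for the cases with $d$ divisible by $3$ (after clearing, cases $(8)$–$(10)$ have $d$ a multiple of $2$ and the $16$ or $12$ in the denominator interacts with $2$ and $3$), I would pick $p=2$ or $p=3$ accordingly and track the constant term $-(k^2 r^4 d - 3 k r^4 d + r^4 d + 90)$ modulo $p$; when $k=\tfrac{7}{16}$ one multiplies through by $256$ first, when $k=\tfrac{7}{12}$ by $144$. The main obstacle — and it is a bookkeeping obstacle, not a conceptual one — is that a single prime need not work uniformly across all six cases, so I expect to have to choose the modulus case-by-case and, in one or two stubborn cases, fall back on the elementary observation that $d^2 m^8$ already exceeds the entire right-hand side for $m\ge 2$ (reducing to checking $m=1$ by hand) when no convenient congruence presents itself. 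Carrying this out gives the claimed nonexistence of positive integer solutions in every case, completing the proof.
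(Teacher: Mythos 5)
Your proposal is correct and follows essentially the same strategy as the paper: after substituting the six triples $(d,r,k)$ (which you identify correctly) and clearing denominators, the paper kills cases $(4)$, $(8)$, $(9)$ by exactly the congruence obstructions you describe (mod $2$, mod $9$, mod $2$ respectively), and handles $(2)$, $(3)$, $(10)$ by a finite check. The only real difference is how that finite check is bounded: the paper isolates the constant term and uses $m\mid(\text{constant term})$ to restrict to the divisors of $116$ or $7$, whereas you propose a growth estimate on the octic term; both work, but note that your stated threshold ``$d^2m^8$ exceeds the rest for $m\ge 2$'' fails in case $(2)$, where $4m^8-252m^4-168m^3+648m^2-90m-232$ is still negative at $m=2$, so you would need to check $m=1$ and $m=2$ by hand there (both give nonzero values, so the conclusion is unaffected).
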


\begin{proof}  For the case $(2)$. If $\ a= 1,b= 1,r=-1,k=10,$ then the above equation is
$$
4m^{8}-252m^4-168m^3+648m^2-90m-232=0.$$
It can be changed into $2m^8-126m^4-84m^3+324m^2-45m=116$.
This implies that if $m$ positive integral, then $m \mid 116$ and so the probability of $m$ is $1,2,4,29,58,116$. Let
 \[
 f(m)=2m^8-126m^4-84m^3+324m^2-45m-116.
  \]
  It is shown in
  \[
  f(1)=-45\neq 0,f(2)=-1086\neq 0,f(4)=98328\neq 0,f(29)\doteq 1.0004 \times 10^{12}\neq 0,
   \]
   \[
   f(58)\doteq 2.5612 \times 10^{14} \neq 0, f(116)\doteq 6.5568 \times 10^{16} \neq 0
   \]
    that there is no positive integral solution of case (2).

 For the case (3). If  $\ a= 1,b= 1,r=-4,k=\frac{7}{16},$ then the above equation is
  \[
  4m^{8}-252m^4-672m^3-648m^2-252m-28=0.
   \]
   It can be changed into
   $
   m^{8}-63m^4-168m^3-162m^2-63m=7.
   $
    This implies that if $m$ positive integral, then $m \mid 7$ and so the probability of $m$ is $1,7$. Let $$f(m)=m^{8}-63m^4-168m^3-162m^2-63m-7.$$ It is shown in $f(1)=462\neq 0,f(7)=5547528\neq 0$ that there is no positive integral solution of case (3).

 For the case (4). If $\ a= 4,b= 4,r=-2,k=\frac{7}{16},$ then the above equation is
 \[
 1024m^{8}-4032m^{4}-5376m^{3}-2592m^2-504m-28=0.
 \]
 It can be changed into $256m^{8}-1008m^{4}-1344m^{3}-648m^2-126m=7.$ If $m$ is an integral, by $2 \mid (256m^{8}-1008m^{4}-1344m^{3}-648m^2-126m)$ and $2 \nmid 7$, we can get a contradiction.

 For the case (8). If $\ a= 3,b= 3,r=-2,k=\frac{7}{12},$  then the above equation is
 \[
 324m^{8}-2268m^{4}-3024m^{3}-1080m^2+28=0.
  \]
  It can be changed into $81m^{8}-567m^{4}-756m^{3}-270m^2=-7.$ If $m$ is an integral, by $9 \mid 81m^{8}-567m^{4}-756m^{3}-270m^2$ and $9 \nmid -7$, we can get a contradiction.

For the case (9). If $\ a= 12,b= 12,r=-1,k=\frac{7}{12},$ then the above equation is
 \[
 82944m^{8}-36288m^{4}-24192m^{3}-4320m^2+28=0.
 \]
 It can be changed into $20736m^{8}-9072m^{4}-6048m^{3}-1080m^2=-7.$
 If $m$ is an integral, by $2 \mid (20736m^{8}-9072m^{4}-6048m^{3}-1080m^2)$ and $2 \nmid -7$, we can get a contradiction.

For the case (10). If $\ a= 16, b= 16,r=-1,k=\frac{7}{16},$ then the above equation is
 \[
 262144m^{8}-64512m^{4}-43008m^{3}-10368m^2-1008m-28=0.
 \]
 It can be changed into $65536m^{8}-16128m^{4}-10752m^{3}-2592m^2-252m=7.$
 This implies that if $m$ positive integral, then $m \mid 7$ and so the probability of $m$ is $1,7$. Let
 \[
 f(m)=65536m^{8}-16128m^{4}-10752m^{3}-2592m^2-252m-7.
  \]
  It is shown in $f(1)= 35805\neq 0,f(7)\doteq 3.7776 \times 10^{11}\neq 0$ that there is no positive integral solution of case (10).
\end{proof}

\begin{lemma}\label{lem:3}
For the cases $(1), (3), (4), (5)$ in the proof of Lemma \ref{lem:levelzero},
there is no  positive integral solution of the following equation with unknown number $m$.
\begin{align*}
&126dm^4-84rdm^3-(36kr^2d-36r^2d)m^2-(\frac{1008}{r}-18kr^3d+9r^3d)m\\
&+(k^2r^{4}d-3kr^4d+r^4d+208)=d^{2}m^{8}.
\end{align*}
\end{lemma}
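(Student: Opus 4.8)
The plan is to handle each of the four cases $(1),(3),(4),(5)$ listed in the proof of Lemma \ref{lem:levelzero} by substituting the given values of $d$, $r$ and $k$ into the displayed octic equation in $m$ and then ruling out positive integer roots by elementary divisibility or rational-root arguments, exactly as was done in Lemmas \ref{lem:1} and \ref{lem:2}. First I would simplify the equation by moving everything to one side, so it reads $d^{2}m^{8}-126dm^4+84rdm^3+(36kr^2d-36r^2d)m^2+(\frac{1008}{r}-18kr^3d+9r^3d)m-(k^2r^{4}d-3kr^4d+r^4d+208)=0$; note that for every one of the four cases the index $r$ divides $1008$, so the coefficient $\frac{1008}{r}$ is an integer and the whole expression has integer coefficients. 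Then I would divide through by the greatest common divisor of the coefficients to get a primitive integral equation.

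Next, for each case I would apply the cheapest available obstruction. For cases where the constant term (after clearing common factors) is $\pm 1$ times a small number whose prime factors are limited, I expect a congruence mod a small prime $p$ to finish it: one checks that $p$ divides every term except the constant, while $p\nmid$ the constant, which is impossible for integral $m$ — this is precisely the trick used for cases $(3)$ of the computations in Lemma \ref{lem:1} and cases $(4),(8),(9)$ in Lemma \ref{lem:2}. For the cases where such a clean congruence is not available, I would fall back on the rational root test: any positive integer solution $m$ must divide the constant term, so $m$ ranges over a short explicit list of divisors, and I would evaluate the polynomial $f(m)$ at each candidate and record that $f(m)\neq 0$ in every instance, as was done for cases $(2),(3),(10)$ in Lemma \ref{lem:2}. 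Concretely, with $d=3,r=-4,k=\tfrac12$ in case $(1)$ the equation becomes $9m^{8}-378m^4-1008m^3-432m^2+\dots=0$ after substitution, and I would look for a mod-$2$ or mod-$3$ obstruction first; case $(4)$ with $d=224,r=-1,k=1$ and case $(5)$ with $d=768,r=-1,k=\tfrac12$ will have larger coefficients but the constant terms remain products of small primes, so either a mod-$2$ argument or a short divisor check suffices.

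The main obstacle will be purely computational bookkeeping: making sure the substitution of $k^2r^4d$, $kr^4d$ and the mixed $m^2$, $m^1$ coefficients is done without arithmetic slips, since the coefficients in cases $(4)$ and $(5)$ involve numbers in the tens or hundreds of thousands, and then identifying, for each case, which small modulus exposes the contradiction (the modulus depends on the $2$-adic and $3$-adic valuations of $d$ and of the constant term $208$ relative to the other coefficients). There is no conceptual difficulty — the structure $d^2m^8$ dominating a degree-$4$ polynomial in $m$ already forces $m$ to be bounded, and for genuinely small bounds a finite check closes the argument, while for the remaining cases the parity/$3$-divisibility of the non-constant part against the constant $\pm 7$ or $\pm$(something not divisible by the chosen prime) does the job. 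I would present the four cases in sequence, mirroring the format of Lemma \ref{lem:2}: state the substituted equation, reduce it to primitive form, and then give either the one-line congruence contradiction or the finite list of divisor candidates together with the nonzero values of $f$ at each.
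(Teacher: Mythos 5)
Your proposal follows essentially the same route as the paper: substitute the values of $d,r,k$ for each case, clear denominators to obtain a primitive integral polynomial in $m$, and then derive a contradiction from a congruence modulo a small number (the paper uses mod $3$, mod $72$, mod $7$, and mod $9$ for cases $(1),(3),(4),(5)$ respectively), with the rational-root/divisor check as a fallback. The only issue is a minor arithmetic slip in your sample computation for case $(1)$ (the $m^2$-coefficient is $-864$, not $-432$), which does not affect the mod-$3$ argument.
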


\begin{proof} For the case (1). If $\ d=3,r=-4,k=\frac{1}{2}, $ then the above equation is
\[
9m^8-378m^4-1008m^3-864m^2-252m=16.
\]
 If $m$ is integral, by $3 \mid (9m^8-378m^4-1008m^3-864m^2-252m)$ and $3 \nmid 16$, we can get a contradiction.

For the case (3). If $\ d=48, r=-2, k=\frac{1}{2},$  then the above equation is
\[
2304m^8-6048m^4-8064m^3-3456m^2-504m=16.
  \]
  If $m$ is integral, by $72 \mid (2304m^8-6048m^4-8064m^3-3456m^2-504m)$ and $72 \nmid 16$, we can get a contradiction.

For the case (4).  If  $\ d=224, r=-1,k=1,$ then the above equation is
 \[
 50176m^8-28224m^4-18816m^3+1008m+16=0.
   \]
   It can be changed into
   $
   3136m^{8}-1764m^{4}-1176m^{3}+63m=-1.
   $
    If $m$ is an integral, by $7 \mid (3136m^{8}-1764m^{4}-1176m^{3}+63m)$ and $7\nmid -1,$ we can get a contradiction.

For the case (5).  If $\ d=768,r=-1,k=\frac{1}{2},$ then the above equation is
\[
589824m^8-96768m^4-64512m^3-13824m^2-1008m-16=0.
\]
  It can be changed into
  $
  36864m^{8}-6048m^{4}-4032m^{3}-864m^2-63m=1.
  $
If $m$ is integral, by $9 \mid (36864m^{8}-6048m^{4}-4032m^{3}-864m^2-63m)$ and $9 \nmid 1$, we can get a contradiction.
\end{proof}


\end{document}